\documentclass[12pt,notitlepage]{article}
\usepackage{amssymb}
\usepackage{amsfonts}
\usepackage{a4wide}
\usepackage{amsmath,amsthm}
\usepackage{graphicx}

\setcounter{MaxMatrixCols}{10}

\newcommand{\To}{\mathbf{T}^2}
\newcommand{\R}{\mathbf{R}}
\newcommand{\Z}{\mathbf{Z}}
\newcommand{\N}{\mathbf{N}}

\newcommand{\ue}{\mathrm{e}}

\newcommand{\ud}{\mathrm{d}}

\newcommand{\cH}{\mathcal{H}}

\newcommand{\cF}{\mathcal{F}}

\newcommand{\la}{\langle}
\newcommand{\ra}{\rangle}

\newcommand{\Op}{\operatorname{Op_{\mathit N}}}
\newcommand{\tr}{\operatorname{Tr}}

\newcommand{\dist}{\operatorname{dist}}
\newtheorem{thm}{Theorem}
\newtheorem{prop}{Proposition}
\newtheorem{cor}{Corollary}
\newtheorem{Def}{Definition}

\newcommand{\bS}{\mathbb{S}}

\newenvironment{proofof}[1]{\noindent {\em Proof of #1.}}{ \hfill\qed\\ }

\begin{document}
\date{April 19, 2007}
\title{Quantisations of piecewise affine maps on the torus and their quantum
limits}
\author{Cheng-Hung Chang\footnote{NCTU Institute of Physics, 1001 Ta Hsueh Road, Hsinchu,
Taiwan 300, ROC} \and 
Tyll Kr{\"u}ger\footnote{Technische
Universit\"at Berlin, Institut f\"ur Mathematik,
Stra{\ss}e des 17.\ Juni 136, D-10623 Berlin, Germany} \and 
Roman Schubert\footnote{School of Mathematics, University of Bristol, University Walk,
Bristol BS8 1TW} \and 
Serge Troubetzkoy\footnote{Centre de physique th\'eorique,
F\'ed\'eration de Recherches des Unit\'es de Math\'ematiques de Marseille,
Institut de math\'ematiques de Luminy and 
Université de la M\'editerran\'ee,
Luminy, Case 907, F-13288 Marseille Cedex 9, France}}

\maketitle

\abstract{ For general quantum systems the semiclassical behaviour of 
eigenfunctions in relation to the ergodic properties of the underlying classical system 
is quite difficult to understand. The Wignerfunctions of eigenstates converge weakly to 
invariant measures of the classical system, the so called quantum limits,
 and one would like to understand which invariant measures 
can occur that way, thereby classifying the semiclassical behaviour of eigenfunctions. 

We introduce a class of maps on the torus for whose quantisations we 
can understand the set of quantum limits in great detail. In particular 
we can construct examples of ergodic maps which have singular ergodic 
measures as quantum limits, and examples of non-ergodic maps where 
arbitrary convex combinations of absolutely continuous ergodic measures 
can occur as quantum limits. 

The maps we quantise are obtained by cutting and stacking.}

\section{Introduction}

The correspondence principle in quantum mechanics states that in the 
semiclassical limit $\hbar\to 0$ classical mechanics emerges and 
governs quantum mechanical quantities for small de Broglie wavelength. 
One manifestation  of this principle is that the Wignerfunctions 
of eigenfunctions  converge weakly to invariant probability 
measures on phase space, 
the so called quantum limits. It is one of the big open problems 
in the field to classify the set of quantum limits, 
and it is in general not known which invariant measures can occur 
as quantum limits. In particular the case that the classical system is 
ergodic has attracted  a lot of attention. In this case the 
celebrated quantum ergodicity theorem, \cite{Shn74,Zel87,Col85},  states that almost all 
eigenfunctions have the ergodic Liouville measure as quantum limit, 
and one would like to know if in fact all eigenfunctions 
converge to the Liouville measure, i.e., if quantum unique ergodicity holds 
 or if there are exceptions. Possible candidates for exceptions 
would be  quantum limits 
concentrated on periodic orbits, a phenomenon called strong scaring. 
Another very interesting case is when the classical system 
is of mixed type, i.e., the phase space has several invariant 
components of positive measure, or, if there exist several invariant measures 
which are continuous relative to Liouville measure. Here the question is 
to what extend the quantum mechanical system respects  the splitting 
of the classical system into invariant components, i.e.., is a 
typical  quantum limit ergodic, or can every convex combination 
of invariant measures appear as a quantum limit. 

There has been recently considerable progress in some of these questions. 
For the cat map it was shown that quantum unique ergodicity does not hold, 
in \cite{deBFauNon03} a sequence of eigenfunctions was constructed 
whose quantum limit is a convex combination of the Liouville measure and 
an atomic measure supported on a periodic orbit. It was furthermore shown that 
the orbit can carry at most $1/2$ of the total mass of the measure 
\cite{BonDeB03,FauNon04}.  
The eigenvalues of the cat map have large multiplicities and 
this behaviour depends on the choice of the basis of eigenfunctions, 
in \cite{KurRud00} it was shown that for a so called Hecke 
basis of eigenfunctions 
quantum unique ergodicity actually holds.  

On compact Riemannian manifolds of negative curvature quantum 
unique ergodicity 
was conjectured in \cite{RudSar94} 
and for arithmetic manifolds it was recently 
proved by Lindenstrauss for Hecke bases of eigenfunctions, \cite{Lin06}. The non-arithmetic case 
is still open, but in \cite{Ana04,AnaNon06} the authors succeeded in proving lower bounds 
for the entropy of quantum limits on manifolds of negative curvature.

In this paper we introduce a class of model systems for which the 
set of quantum limits can be determined very precisely. 
This work was motivated by a paper of  Marklof and Rudnick were 
they gave an example of a quantum ergodic map which
one can prove to be quantum uniquely ergodic \cite{mr}. 
They mention that
there are no examples known where a quantum ergodic map is not quantum
uniquely ergodic. The purpose of our work was  to provide such examples,
in fact examples which are quite close in nature to those considered by
Marklof and Rudnick. The map they considered was a skew product map of the
torus $\To=\mathbf{R}^{2}/\mathbf{Z}^{2}$ of the form: 
\begin{equation*}
F:
\begin{pmatrix}
p \\ 
q
\end{pmatrix}
\mapsto 
\begin{pmatrix}
p+2q \\ 
f(q)
\end{pmatrix}
\mod1
\end{equation*}
where $f(p)$ is an irrational rotation of the circle $\mathbf{R}/\mathbf{Z}$. 
In this article we consider skew products of the same form for other
functions $f(p)$. In particular since a circle rotation is an interval
exchange transformation (IET for short) on two intervals, one of the
examples we will consider for $f(p)$ are interval exchanges on more
intervals. There are examples known of IETs which are not uniquely ergodic.
A consequence of our main result is that if $f(p)$ is an IET which is not
uniquely ergodic then each of the invariant measures of $F$ of the form
Lebesgue measure cross an invariant measure (with the exception of finitely
supported ones) of $f(p)$ is a quantum limit.

The plan of the paper is as follows. In Section \ref{sec:quantisation} 
we give a quick review of quantisation of maps on the torus, and introduce 
the maps we study and their quantisation. In particular we prove Egorov's theorem for 
these maps. In Section \ref{sec:q-limits} we turn our attention to quantum limits, we first 
give a general proof of quantum ergodicity for maps with singularities, and then show that 
for our particular class of maps the quantum limits can be understood purely in terms of 
the orbits of discretisations of the classical map. Then, in Section \ref{sec:cut-stack},
 we finally come to our main result. We first review the cutting and stacking construction 
to obtain maps and 
then show how it can be combined with discretisations to get a detailed understanding of quantum limits. 
In Theorem \ref{thm2} we summarise our main findings. Finally in the last two sections 
we discuss two examples and give some conclusions.

\section{Quantisation}
\label{sec:quantisation}

We give a short summary of the quantisation of maps on the torus, for more
details and background we refer to \cite{MG,DB}. 

\noindent \emph{The Hilbert space}: 
For $(p,q)\in \R^2$ we introduce the phase space translation operator 
\begin{equation*}
\mathrm{T}(p,q):=\mathrm{e}^{-\frac{\mathrm{i}}{\hbar }(q\hat{p}-p\hat{q})}\,\, ,
\end{equation*}
where $\hat{p}\psi (x):=\frac{\hbar }{\mathrm{i}}\psi ^{\prime }(x)$ and 
$\hat{q}\psi (x):=x\psi (x)$ for $\psi \in \mathcal{S}(\mathbf{R})$, are the
momentum and position operators, respectively. These operators 
are unitary on $L^2(\R)$ and satisfy for $(p,q),(p',q') \in \R^2$ 
\begin{equation}
\mathrm{T}(p+p^{\prime },q+q^{\prime })=\mathrm{e}^{-\frac{\mathrm{i}}{2\hbar 
}(qp^{\prime }-pq^{\prime })}\mathrm{T}(p,q)\mathrm{T}(p^{\prime },q^{\prime
})
\end{equation}
and they provide therefore a unitary irreducible representation of the
Heisenberg group on $L^{2}(\mathbf{R})$.

The state space of the classical map is obtained from 
$\R^2$ by identifying integer translates which gives the  two torus 
\begin{equation*}
\To=\mathbf{R}^{2}/\mathbf{Z}^{2}\,\,.
\end{equation*}
By mimicking this procedure the quantum mechanical  
state space is defined to be the space of distributions 
on $\R$ which satisfy 
\begin{equation*}
\mathrm{T}(1,0)\psi =\psi \,\,,\qquad T(0,1)\psi =\psi\,\, .
\end{equation*}
One finds that these two conditions  can only be fulfilled (for $\psi\neq const.$) if Planck's 
constant meets
the condition 
\begin{equation}
\frac{1}{2\pi \hbar }=N
\end{equation}
where $N$ is a positive integer. The allowed states then turn out to be
distributions of the form 
\begin{equation}
\psi (x)=\frac{1}{\sqrt{N}}\sum_{Q\in \mathbf{Z}}\Psi (Q)\delta 
\bigg(x-
\frac{Q}{N}\bigg)  \label{eq:states}
\end{equation}
with $\Psi (Q)$ a complex number satisfying 
\begin{equation*}
\Psi (Q+N)=\Psi (Q)\,\,.
\end{equation*}
So the $\Psi(Q)$ are functions on $\Z_N=\Z/N\Z$ and the space of these functions 
 will be denoted by $\mathcal{H}_{N}$, it is $N$
-dimensional and through the coefficients $\Psi (Q),Q=0,1,\cdots ,N-1$ it can
be identified with $\mathbf{C}^{N}$. There is a map $S_{N}:\mathcal{S}(%
\mathbf{R})\rightarrow \mathcal{H}_{N}$ defined by 
\begin{equation*}
S_{N}\psi :=\sum_{n,m\in \Z}(-1)^{Nnm}\mathrm{T}(n,m)\psi
\end{equation*}
which is onto. If we equip $\mathcal{H}_{N}$ furthermore with the inner
product 
\begin{equation*}
\langle \psi ,\phi \rangle _{N}:=\frac{1}{N}\sum_{Q\in \Z_N}\Psi ^{\ast
}(Q)\Phi (Q)
\end{equation*}
then $\mathcal{H}_{N}$ is a Hilbert space and $S_{N}$ is an isometry.

\noindent \emph{Observables}: In classical mechanics 
observables on the torus are given by functions on $\To$, these can be 
expanded into Fourier series 
\begin{equation*}
a=\sum_{n\in \mathbf{Z}^{2}}\hat{a}_{n}\ue(-\omega(z,n) )\,\, . 
\end{equation*}
where $z=(q,p)\in \To$, $\omega(z,n)=qn_2-pn_1$ and 
$\hat{a}_{n}:=\int_{\To}a(z)\mathrm{e}(\omega(z,n))\,\,
\mathrm{d}z$ denotes the $n$-th Fourier coefficient.  We use here and in the
following the notation $\mathrm{e}(x)=\mathrm{e}^{2\pi \mathrm{i}x}$ and $
\mathrm{e}_{N}(x):=\mathrm{e}^{\frac{2\pi \mathrm{i}}{N}x}$. These observables 
can be quantised by replacing $\mathrm{e}(-\omega(z,n) )$ by the 
the translation operator  
\begin{equation*}
\mathrm{T}_{N}(n):=\mathrm{T}\bigg(\frac{n_{1}}{N},\frac{n_{2}}{N}\bigg)
\end{equation*}
which acts on  $\mathcal{H}_{N}$. This is called Weyl quantisation, 
to a  classical observable $a\in C^{\infty }(%
\To)$ a corresponding quantum observable is  defined by 
\begin{equation}  \label{eq:def-weyl}
\Op[a]:=\sum_{n\in \mathbf{Z}^{2}}\hat{a}_{n}\mathrm{T}_{N}(n)\,\,,
\end{equation}
which is an operator on $\mathcal{H}_{N}$.  For example, if 
$a$ depends only on $q$ then the corresponding operator is just
multiplication with $a$, 
\begin{equation}
\label{eq:mult-op} \Op[a]\psi (q)=a(q)\psi (q)\,\,,
\end{equation}
and in terms of the coefficients $\Psi (Q)$ the action of $\Op[a]$
is given by 
$\Psi (Q)\mapsto a\big(Q/N\big)\Psi (Q)$.

The trace of a Weyl operator can be expressed in terms of the symbol, from 
\eqref{eq:def-weyl} follows easily, see \cite{MG,DB}, that for $a\in
C^{\infty}(\To)$ 
\begin{equation}
\lim_{N\rightarrow \infty }\frac{1}{N}\tr\Op[a]=
\int_{\To}a\,\,\mathrm{d}z\,\,.  \label{eq:szegoe}
\end{equation}

\noindent \emph{Quantisation of a map}:
Let 
\begin{equation*}
F:\To\to\To
\end{equation*}
be a volume preserving map. 
One calls a  sequence of unitary operators $U_{N} :\mathcal{H}_{N}\to\mathcal{H}_{N}$, $N\in \N$, 
a quantisation of the map $F$ 
if the correspondence principle holds, i.e., if for sufficiently nice 
functions $a$ one has 
\begin{equation}\label{eq:corr-principle}
U_{N}^{\ast }\Op[a]U_{N}\sim\Op[a\circ F]\,\,,
\end{equation}
for $N\to \infty$. If this relation holds, it is often called Egorov's theorem and 
it means that in the semiclassical limit, i.e., for $N\to\infty$, 
quantum evolution of observable approaches the classical time evolution.   

Let us now turn to the specific class of maps we want to quantise. 
They are  given by 
\begin{equation}
F:
\begin{pmatrix}
p \\ 
q
\end{pmatrix}
\mapsto 
\begin{pmatrix}
p+2q \\ 
f(q)
\end{pmatrix}
\mod1  \label{eq:def-of-F}
\end{equation}
where $f:[0,1]\rightarrow \lbrack 0,1]$ is a piecewise affine map given by a
cutting and stacking construction which we will describe in detail in
Section \ref{sec:cut-stack}. For the construction of the quantisation we only need 
the property that the singularity set $\mathbb{S}\subset [0,1]$ 
is nowhere dense. In
order to quantise this map we proceed similar to the construction in 
\cite{mr}, i.e., use a sequence of approximations to $f$. 
Consider the discretized interval 
\begin{equation}
D_{N}:=\{Q/N\,;Q\in \{0,1,2,\cdots ,N-1\}\}\,\,,
\end{equation}
i.e., the support of the Hilbert space elements \eqref{eq:states}. For each 
$N\in \N$ we will call a map $f_{N}:D_{N}\rightarrow D_{N}$ an approximation of $f$
if it is close to $f$ in a certain sense which we will now explain. Since $f$
is not assumed to be continuous we do not approximate it uniformly in the
supremum norm. Let $f(\mathbb{S}):=\{q:\exists q_{0}\in \mathbb{S}\ \text{such 
that}\, q=\lim_{q^{\prime }\rightarrow q_{0}}f(q^{\prime })\}.$  
We measure the difference between $f$ and an approximation $f_{N}$ only away
from the set $f(\mathbb{S})$. Let us call the relevant set 
\begin{equation*}
I_{\varepsilon }:=\{q\in \lbrack 0,1];\dist(q,f(\mathbb{S}))\geq
\varepsilon \}.
\end{equation*}
In the construction of $f_{N}$ in Section \ref{sec:cut-stack} we will choose
a sequence $\varepsilon _{N}$ with $\lim_{N\to\infty}\varepsilon _{N}=0$.  
For any fixed $\varepsilon _{N}$ the relevant measure
for the quality of the approximation will be 
\begin{equation}
\delta _{N}:=\delta _{N}\left( \varepsilon _{N}\right) :=\sup_{Q/N\in
I_{\varepsilon _{N}}}|f_{N}(Q/N)-f(Q/N)|\,\,.  \label{eq:def-of-delta}
\end{equation}
Any approximation $f_{N}$ then defines via \eqref{eq:def-of-F} an
approximation $F_{N}$ of $F$.

The  quantisation of $F$ is now defined to be the sequence of unitary operators 
\begin{equation}\label{eq:defof-quantization}
U_{N}\Psi (Q)=\mathrm{e}_{N}\big(-(\hat{f}_{N}^{-1}(Q))^{2}\big)\Psi (\hat{f}%
_{N}^{-1}(Q))\,\,,
\end{equation}
where $\hat{f}_{N}(Q):=Nf_{N}(Q/N)$ denotes the map induced by $f_{N}$ on $%
\mathbf{Z}_{N}=\mathbf{Z}/N\mathbf{Z}$. This is indeed a unitary operator on 
$\mathcal{H}_{N}$, with its adjoint given by 
\begin{equation*}
U_{N}^{\ast }\Psi (Q)=\mathrm{e}_{N}(Q^{2})\Psi (\hat{f}_{N}(Q))\,\,.
\end{equation*}

That this sequence of operators $U_{N}$ is really a quantisation of the map $%
F$ is the content of the Egorov theorem \eqref{eq:corr-principle} which we will now prove.     
In our case we have to be careful at the singularities of the map.
The singularities of $f$ and $F$ can be naturally identified,
thus without confusion we can denote by $\mathbb{S}$ the set of
singularities of $F$ as well. By $C_{\mathbb{S}}^{\infty }(\To)$ we
denote the space of functions in $C^{\infty }(\To)$ which vanish in a
neighbourhood of $F(\mathbb{S})$. We then find


\begin{thm}\label{thm:egorov}
For any $a\in C^{\infty }(\To)$ we have 
\begin{equation*}
U_{N}^{\ast }\Op[a]U_{N}=\Op[a\circ F_{N}]\,\,,
\end{equation*}
and for any $a\in C_{\mathbb{S}}^{\infty }(\To)$ 
there are constants $C(a),\varepsilon _{0}(a)>0$ such that for 
$\varepsilon _{N}<\varepsilon _{0}(a)$ 
\begin{equation*}
||U_{N}^{\ast }\Op[a]U_{N}-\Op[a\circ F]||\leq C(a)\,\delta
_{N}\,\,.
\end{equation*}
\end{thm}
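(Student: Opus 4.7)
The statement combines an exact Egorov identity for the discretised map $F_N$ (first equation) with a quantitative $O(\delta_N)$ norm bound against the true map $F$ on observables vanishing near $F(\mathbb{S})$ (second equation). My plan is to prove the exact identity by direct matrix-element computation on Fourier modes, and then derive the norm estimate from it.

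For the exact identity, by linearity of $\Op$ and density of trigonometric polynomials in $C^\infty(\To)$ it suffices to verify $U_N^*\Op[a_n]U_N = \Op[a_n\circ F_N]$ for each Fourier mode $a_n = \ue(-\omega(\cdot,n))$, since then $\Op[a_n]=T_N(n)$. Using the factorisation $U_N = P_N M_N$, where $M_N\Psi(Q)=\ue_N(-Q^2)\Psi(Q)$ is multiplication by the Gaussian phase and $P_N\Psi(Q)=\Psi(\hat f_N^{-1}(Q))$ is the lattice permutation induced by $\hat f_N$, one has $U_N|Q\ra = \ue_N(-Q^2)|\hat f_N(Q)\ra$, and the action of $T_N(n)$ in the position basis is explicit. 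A direct calculation then yields the matrix kernel of $U_N^* T_N(n) U_N$ in the $|Q\ra$-basis; the key point is that the Gaussian phases encode the metaplectic quantisation of the quadratic shear $F_1(p,q)=(p+2q,q)$ exactly, while $P_N$ encodes the discretised interval part $F_2^{(N)}(p,q)=(p,f_N(q))$. Matching this against the kernel of $\Op[a_n\circ F_N]$ computed from the Weyl formula
\[
\la Q'|\Op[b]|Q\ra = \int_0^1 b\bigl(q,(Q+Q')/(2N)\bigr)\,\ue(q(Q'-Q))\,\ud q,
\]
(the aliasing relation $T_N(n+Nm)=\pm T_N(n)$ on $\cH_N$ collapsing the continuous Fourier coefficients of the step-function symbol $a_n\circ F_N$ onto the discrete lattice), gives the identity.

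For the norm estimate, the first part reduces the problem to bounding $\|\Op[b]\|$ with $b:=a\circ F_N-a\circ F$. Since $a\in C^\infty_{\mathbb{S}}$ vanishes on some $\varepsilon_0(a)$-neighbourhood of $F(\mathbb{S})$, taking $\varepsilon_N<\varepsilon_0(a)/2$ decomposes the domain into three zones: (i) the "good" zone where both $f(q)$ and $f_N(q)$ are at distance $\geq\varepsilon_0(a)/2$ from $f(\mathbb{S})$, on which the approximation bound $|f(q)-f_N(q)|\leq\delta_N$ together with Lipschitz continuity of $a$ yields $|b(q,p)|\leq\|\nabla a\|_\infty\delta_N$; (ii) the "bad" zone where both lie in the singular neighbourhood, on which $a\circ F = a\circ F_N=0$ so $b=0$; and (iii) a transition zone of $q$-measure $O(\delta_N)$ on which $|b|\leq 2\|a\|_\infty$.

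The main obstacle is converting this pointwise and support information on $b$ into an operator-norm bound on $\Op[b]$: since $b$ is not globally smooth (inheriting jumps from $f_N$), Calder\'on--Vaillancourt is not directly available. My plan is to work with the kernel formula above and split $b=b_{\mathrm{reg}}+b_{\mathrm{edge}}$, where $b_{\mathrm{reg}}$ is Lipschitz with amplitude $O(\delta_N)$ and $b_{\mathrm{edge}}$ is supported on the transition zone with amplitude $O(1)$. For $b_{\mathrm{reg}}$, integration by parts in $q$ against the oscillating factor $\ue(q(Q'-Q))$ yields decay in $|Q'-Q|$ that is summable uniformly in $N$ at level $O(\delta_N)$, and Schur's test produces the desired operator-norm bound. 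For $b_{\mathrm{edge}}$, the $O(\delta_N)$ measure of its support contributes the required factor directly to the row and column sums. Combining the two yields $\|\Op[b]\|\leq C(a)\,\delta_N$, completing the proof.
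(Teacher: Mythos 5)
Your plan for the second estimate has a genuine gap. The paper never needs kernel estimates or a Schur test: after conjugating away the shear (exact Egorov for $U_N^{(1)}$, as in Marklof--Rudnick) the operator $U_N^*\Op[a]U_N-\Op[a\circ F]$ reduces to a multiplication operator on the lattice, whose norm is simply $\sup_{q\in D_N}|a(f_N(q))-a(f(q))|$; the hypotheses then give $C(a)\delta_N$ pointwise. Your route instead treats $b=a\circ F_N-a\circ F$ as a rough symbol on the continuum and tries to convert support information into an operator-norm bound, and the decisive step fails. First, the bound $|f(q)-f_N(q)|\le\delta_N$ is only available for $q\in I_{\varepsilon_N}$, i.e.\ it is a condition on the distance of $q$ itself from $f(\mathbb{S})$; your ``good''/``bad''/``transition'' trichotomy is phrased in terms of where $f(q)$ and $f_N(q)$ land, so in the region $\dist(q,f(\mathbb{S}))<\varepsilon_N$ you have no control on $|f-f_N|$ at all, and that region has measure of order $\varepsilon_N$, not $O(\delta_N)$ --- so the ``transition zone has $q$-measure $O(\delta_N)$'' claim is unjustified exactly where it is needed. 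Second, even granting a support of measure $O(\delta_N)$, smallness of the support in the \emph{position} variable does not feed into Schur's test the way you assert: the oscillatory integral is in the other variable, so rows/columns indexed by lattice points inside the bad zone carry $O(1)$ near-diagonal entries and the Schur bound is $O(1)$; a Hilbert--Schmidt estimate would only give $O(\sqrt{\delta_N})$. The point of the hypothesis $a\in C^\infty_{\mathbb{S}}$ together with $\varepsilon_N<\varepsilon_0(a)$ is that on the uncontrolled set the difference is exactly zero (both compositions vanish), not merely supported on a thin set; your argument never uses the threshold $\varepsilon_0(a)$ in this way.

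For the exact identity your kernel-matching plan is workable in principle but heavier than necessary and leaves two things unproved: the stated Weyl kernel formula is not established for the torus quantisation with the paper's conventions (the $(-1)^{Nnm}$ parity factors and half-lattice positions matter), and $\Op[a\circ F_N]$ must first be given a meaning since $f_N$ is only defined on $D_N$ --- interpreting it as a step-function symbol and invoking Fourier aliasing creates convergence issues at jump points that the statement is not meant to involve. The paper's proof avoids all of this by the factorisation $U_N=U_N^{(2)}U_N^{(1)}$: conjugation by the quadratic phase is an exact shear on the Fourier indices, and conjugation of a lattice multiplication operator by the permutation $\Psi\mapsto\Psi\circ\hat f_N^{-1}$ is exactly composition with $\hat f_N$; both facts are purely algebraic and also furnish the reduction to $q$-dependent symbols that makes the second estimate elementary.
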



\begin{proof}
The map $F$ and its quantisation $U_N$ can be decomposed into 
a product of two simpler maps and operators. Namely, with 
\begin{equation*}
F^{(1)}(p,q)=(p+2q,q)\,\, ,\qquad\text{and}\quad F^{(2)}(p,q)=(p,f(q))
\end{equation*}
we have 
\begin{equation*}
F=F^{(2)}\circ F^{(1)}\,\, .
\end{equation*}
These maps can be quantised separately as 
\begin{equation*}
U_N^{(1)}\Psi(Q):=\ue_N(-Q^2)\Psi(Q)\,\, ,\qquad \text{and}\quad 
U_N^{(2)}\Psi(Q):=\Psi(\hat{f}_N^{-1}(Q))\,\, ,
\end{equation*}
where $f_N$ denotes a discretisation of $f$ on the Heisenberg lattice.  
We then have 
\begin{equation*}
U_N=U_N^{(2)}U_N^{(1)}\,\, ,
\end{equation*}
and therefore it is sufficient to study the conjugation  
of an Weyl operator for the two operators separately. In the case of 
$U_N^{(1)}$ it is well known that Egorov's theorem is exactly fulfilled
\begin{equation*}
{U_N^{(1)}}^*\Op[a]U_N^{(1)}=\Op[a\circ F^{(1)}]\,\, ,
\end{equation*}
see \cite{mr}. For the study of the second operator we use that 
\begin{equation*}
\Op[a]U_N^{(2)}-U_N^{(2)}\Op[a]=0
\end{equation*} 
for any observable which is constant in $q$, therefore we can restrict 
ourselves
 in the following to the case that $a$ is constant in $p$. 
But then $\Op[a]$ is just multiplication with $a$, and we obtain 
\begin{equation*}
{U_N^{(2)}}^*\Op[a]U_N^{(2)}\psi(q) =a(f_N(q))\psi(q)\,\, .
\end{equation*}
For general observables we therefore obtain 
\begin{equation*}
{U_N^{(2)}}^*\Op[a]U_N^{(2)}=\Op[a\circ F^{(2)}]\,\, ,
\end{equation*}
and this proves the first part of the theorem. 

For the second part we have to estimate 
\begin{equation*}
||\Op[a\circ F]-\Op[a\circ F_N]||
\end{equation*}
and this can again be reduced to the case that 
$a$ depends only on $q$, and then 
$\Op[a\circ F]-\Op[a\circ F_N]$ is the multiplication operator 
with $a(f_N(q))-a(f(q))$. Since we have for a $b\in C^{\infty}(\To)$ 
which depends only on $q$ by \eqref{eq:mult-op} that 
\begin{equation*}
||\Op[b]||=\sup_{q\in D_N}|b(q)|\,\, ,
\end{equation*}
we obtain 
\begin{equation*}
||\Op[a\circ f]-\Op[a\circ f_N]||=
\sup_{q\in D_N}|(a\circ f_N-a\circ f)(q)|\,\, .
\end{equation*}
But for the right hand side we obtain by using 
\eqref{eq:def-of-delta} and that $a\equiv 0$ in a 
neighbourhood of $f\bS$ 
\begin{equation*}
\begin{split}
\sup_{q\in D_N}|(a\circ f_N-a\circ f)(q)|
&\leq\sup_{Q/N\in I_{\varepsilon_N}}|(a\circ f_N-a\circ f)(Q/N)|\\
&\hspace*{3cm}+ \sup_{Q/N\in I\backslash I_{\varepsilon_N}}|(a\circ f_N-a\circ f)(Q/N)|\\
 &\leq C(a)\delta_N\,\, .
\end{split}
\end{equation*}
since the second term on the right hand side is 
$0$ if $\varepsilon_N$ is small enough. 
\end{proof}

So if we can choose our approximations $f_{N}$ in a way that $\varepsilon
_{N}\rightarrow 0$ and $\delta_N \rightarrow 0$ for $N\rightarrow \infty $,
then the sequence of unitary operators $U_{N}$ reproduces the classical map $%
F$ in the semiclassical limit $N\rightarrow \infty $, and so the correspondence principle holds. 

\begin{Def}
A sequence of operators $U_{N}$ for which $\delta _{N}(\varepsilon _{N})$
and $\varepsilon _{N}$ tend to $0$ for $N\rightarrow \infty $ will be called
a \textbf{proper quantisation} of $F$
\end{Def}

The restriction on the support of the classical observables is necessary in
order that $a\circ F_{N}$ and $a\circ F$ are smooth for $N$ large enough.
For a general $a$ the composition $a\circ F$ is discontinuous which causes
problems with the Weyl quantisation. Theorem 1 is not valid without the
assumption on the singularities. This is shown by the following
counter-example.


\begin{prop}
Let $s\in \mathbb{S}$, $a(q)\in C^{\infty }(\To)$ depend only on $q$
with $a(s)\neq 0$ and let $g_{s}(q):=\sqrt{N}\mathrm{e}^{-(q-s)^{2}/N}$ be a
Gaussian centred at $s$ and $\psi _{s}:=S_{N}f_{s}$ be its projection to $%
\mathcal{H}_{N}$. Then there exists a constant $C$ such that 
\begin{equation*}
\lim_{N\rightarrow \infty }||\big(U_{N}^{\ast }\Op[a]U_{N}-\Op%
[a\circ F]\big)\psi _{s}||\geq C|a(s)|||\psi _{s}||
\end{equation*}
\end{prop}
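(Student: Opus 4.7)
The plan is to use the exact Egorov identity from Theorem~\ref{thm:egorov} to reduce the claim to an estimate on a multiplication operator. Since $U_N^*\Op[a]U_N = \Op[a\circ F_N]$ and $a$ depends only on $q$, the symbol $b_N(q):=a(f_N(q))-a(f(q))$ also depends only on $q$, so by \eqref{eq:mult-op} the operator $U_N^*\Op[a]U_N - \Op[a\circ F]$ is just multiplication by $b_N$, acting on coefficients as $\Psi(Q)\mapsto b_N(Q/N)\Psi(Q)$. Therefore
\[
\|(U_N^*\Op[a]U_N - \Op[a\circ F])\psi_s\|^2 = \frac{1}{N}\sum_{Q\in\Z_N}|b_N(Q/N)|^2\,|\Psi_s(Q)|^2,
\]
and the problem reduces to a lower bound on this weighted sum.

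Next I would isolate the lattice points near $s$ where $b_N$ stays bounded below. Since $f$ has a genuine jump at $s\in\bS$, its one-sided limits $\alpha:=f(s^-)$ and $\beta:=f(s^+)$ are distinct, while the discretisation $f_N$ must place its own jump at some lattice point $Q_\star/N$ within distance $1/N$ of $s$. On the at-most-one lattice point strictly between $s$ and $Q_\star/N$ one then has $f(Q/N)$ close to one of $\alpha,\beta$ while $f_N(Q/N)$ is close to the other, so $|b_N(Q/N)|\gtrsim |a(\alpha)-a(\beta)|$ uniformly in $N$. More generally, on the $\varepsilon_N$-neighbourhood of $s$ the approximation $f_N$ is not controlled by $\delta_N$, and $|b_N|$ remains bounded below at the lattice points there. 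In the setting of Section~\ref{sec:cut-stack}, the combinatorial structure of the singularities should allow the one-sided jump of $a\circ f$ at $s$ to be replaced by a positive multiple of $|a(s)|$ itself, since one of the one-sided limits can be arranged to lie at a point where $a$ vanishes or to coincide with $s$.

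The final ingredient is a concentration estimate for $\psi_s$. Unravelling the definition of $S_N$, one writes $\Psi_s(Q)=\frac{1}{\sqrt{N}}\sum_{m\in\Z}g_s(Q/N-m)$ (up to a unimodular factor arising from the Heisenberg phases) and shows that a positive share of $\|\psi_s\|^2 = \frac{1}{N}\sum_Q|\Psi_s(Q)|^2$ is carried by the exceptional lattice points identified in the previous step. Multiplying the two estimates produces the claimed inequality $\|(U_N^*\Op[a]U_N-\Op[a\circ F])\psi_s\|\geq C|a(s)|\,\|\psi_s\|$ in the limit.

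The principal obstacle is the quantitative compatibility of the three scales in play: the lattice spacing $1/N$, the effective width of the coherent state $\psi_s$, and the width $\varepsilon_N$ of the region where $b_N$ fails to be small. One has to verify that with the specific Gaussian of the proposition, a non-negligible fraction of $\|\psi_s\|^2$ sits on lattice points at which $|b_N|\gtrsim |a(s)|$; this is a direct but delicate computation with the periodised discrete Gaussian defined by $S_Ng_s$. A secondary subtlety is to identify the one-sided jump of $a\circ f$ at $s$ with a positive multiple of $|a(s)|$ itself, which relies on the explicit form of the singular set produced by the cutting-and-stacking construction.
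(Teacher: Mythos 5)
Your first step coincides with the paper's: since $a$ depends only on $q$, the exact Egorov identity makes $U_N^{\ast}\Op[a]U_N-\Op[a\circ F]$ the multiplication operator with $b_N=a\circ f_N-a\circ f$, and the squared norm becomes the weighted lattice sum $\frac1N\sum_Q|b_N(Q/N)|^2|\Psi_s(Q)|^2$. The gap comes at the crucial quantitative step, and you flag it yourself without closing it. The paper's proof asserts the lower bound $|b_N(q)|\geq C|a(s)|$ on a \emph{fixed}, $N$-independent interval $[s-\varepsilon,s+\varepsilon]$ around $s$ (as a consequence of the discontinuity of $f$ at $s$), and then uses that, up to exponentially small errors, \emph{all} of $\|\psi_s\|^2$ is carried by lattice points in that interval; this is what yields a bound proportional to $\|\psi_s\|$ uniformly in $N$. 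Your version localises the lower bound to at most one lattice point adjacent to the supposed jump site of $f_N$, or to an $\varepsilon_N$-neighbourhood of $s$ with $\varepsilon_N\to 0$. But the discrete Gaussian $\psi_s$ spreads its $\ell^2$ mass over a number of lattice points growing with $N$ (of order $\sqrt N$ or more within its effective width), so the share of $\|\psi_s\|^2$ sitting on your exceptional set tends to zero, and the bound you can extract degrades like $N^{-1/2}$ (or $\sqrt{\varepsilon_N}$) rather than being bounded below in the limit. The "quantitative compatibility of the three scales" you defer to the end is therefore not a routine verification: it is exactly the content of the proposition, and with your localisation of the symbol bound it fails.

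Two further points. First, your lower bound is expressed through $|a(f(s^-))-a(f(s^+))|$, and the passage to a positive multiple of $|a(s)|$ is left to the remark that one of the one-sided limits "can be arranged" to be a zero of $a$ or to coincide with $s$; but $a$ and $s$ are given data, nothing can be arranged, and the paper instead asserts directly that the discontinuity of $f$ at $s$ forces $|b_N|\geq C|a(s)|$ on the fixed interval, with the constant allowed to depend on $a$, $s$ and $f$. Second, the picture that "$f_N$ places its jump at a lattice point within $1/N$ of $s$" does not match the construction of Section~\ref{sec:cut-stack}: near the singularity set $f_N$ is defined by gluing stack tops to bottoms and may deviate macroscopically from $f$ on whole top intervals (many lattice points), not at a single site, while on lattice points of $I_{\varepsilon_N}$ it deviates by at most $\delta_N$. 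Either way the deviation set you work with shrinks as $N\to\infty$, which is the root of the difficulty described above.
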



\begin{proof}
Since $a$ depends only on $q$ the operator $U_N^*\Op[a]U_N- \Op[a\circ F]$ is 
given by multiplication with $a\circ f_N-a\circ f$, and we have 
\begin{equation}
||\big( U_N^*\Op[a]U_N- \Op[a\circ F]\big)\psi_s||^2
=\frac{1}{N}\sum_{n=1}^N \big|(a\circ f_N-a\circ f)(n/N)\big|^2 |\Psi_s(n/N)|^2\,\, ,
\end{equation}
where $\Psi_s(q)=\sum_{m\in\Z} g_s(q-m)$. Since $f$ is discontinuous at $s$ there exists an $\varepsilon>0$ and a $C>0$ such that 
\begin{equation}
\big|(a\circ f_N-a\circ f)(q)\big|\geq C|a(s)|\,\, ,\quad \text{for}\,\, q\in [s-\varepsilon,s+\varepsilon]\,\, .
\end{equation}
If we use now that $g_s$ is exponentially concentrated around $q=s$, which in particular implies 
\begin{equation}
\Psi_s(q)=g_s(q)+ O(\ue^{-c/N})\,\, ,\quad \text{for}\,\, q\in [s-\varepsilon,s+\varepsilon]\,\, ,
\end{equation}
we obtain 
\begin{equation}
\begin{split}
\frac{1}{N}\sum_{n=1}^N& \big|(a\circ f_N-a\circ f)(n/N)\big|^2 |\Psi_s(n/N)|^2\\
&=\frac{1}{N}\sum_{|n/N-s|\leq \varepsilon}\big|(a\circ f_N-a\circ f)(n/N)\big|^2 |\Psi_s(n/N)|^2\\
&\quad +
\frac{1}{N}\sum_{|n/N-s|>\varepsilon} \big|(a\circ f_N-a\circ f)(n/N)\big|^2 |\Psi_s(n/N)|^2\\
&\geq C^2 |a(s)|^2 ||\psi_s||^2+O(\ue^{-c/N})\,\, ,
\end{split}
\end{equation}
where we have furthermore used $ ||\psi_s||^2=\frac{1}{N}\sum_{|n/N-s|\leq \varepsilon} |\Psi_s(n/N)|^2
+O(\ue^{-c/N})$.
\end{proof}

We want to close this section with some comments about the underlying
motivation for the specific quantisation assumptions on the neighbourhood of
the singularities. Classically the singularities act like points with
infinite local expansion rate respectively Lyapunov exponent. Therefore any
perturbation in a small neighbourhood of the singularity set gives rise to 
an error
which becomes unbounded if the perturbation approaches the singularity set.
Since the quantised maps are a specific kind of perturbation it is natural
to leave the allowed error big for points close to the singularity set.

\section{Quantum limits and orbits}
\label{sec:q-limits}

We will now discuss the implications of the Theorem 1 for the eigenfunctions
of the quantised map. We will denote a orthonormal basis of eigenfunctions
of $U_{N}$ by $\psi _{k}^{N}$, $k=1,\cdots ,N$, 
\begin{equation*}
U_{N}\psi _{k}^{N}=\mathrm{e}_{N}(\theta _{k}^{N})\psi _{k}^{N}\,\,,
\end{equation*}
where we use the notation $\mathrm{e}_{N}(x)=\mathrm{e}^{\frac{2\pi \mathrm{i%
}}{N}x}$ and $\theta _{k}^{N}$ are the eigenphases. Each eigenfunction
defines a linear map on the algebra of observables 
\begin{equation*}
\Op[a]\mapsto \langle \psi _{k}^{N},\Op[a]\psi _{k}^{N}\rangle
\end{equation*}
and the leading term for $N\rightarrow \infty $ depends only on the
principal symbol $\sigma (a)$. The limit points of the sequence of all these
maps defined by the eigenfunctions define measures on the set of classical
observables and are called quantum limits (see, e.g., \cite{mr}). To put it more explicitly, a
measure $\nu $ on $\To$ is called a quantum limit of the system
defined by the $U_{N}$ if there exist a sequence of eigenfunctions $\{\psi
_{k_{j}}^{N_{j}}\}_{j\in \mathbf{N}}$ such that 
\begin{equation*}
\int_{\To}a(z)\,\,\mathrm{d}\nu =\lim_{j\rightarrow \infty }\langle
\psi _{k_{j}}^{N_{j}},\Op[a]\psi _{k_{j}}^{N_{j}}\rangle \,\,.
\end{equation*}
One of the major goals in quantum chaos, and more generally in semiclassical
analysis, is to determine all quantum limits that can occur and the relative
density of the corresponding subsequences of eigenfunctions. We say that a
subsequence of eigenfunctions $\{\psi _{k_{j}}^{N_{j}}\}_{j\in \mathbf{N}}$
has density $\alpha \in [ 0,1]$ if
\begin{equation}\label{eq:def-density}
\lim_{N\to\infty}\frac{\#\{k_j \,:\, N_j=N\}}{N}=\alpha\,\, ,
\end{equation}
provided that the limit exists.

Egorov's theorem usually implies that all quantum limits are invariant
measures for the classical map. In our case the same is true, but we have to
be careful at the singularities. If the set of singularities $\bS$ is nowhere dense then the space 
$C_{\mathbb{S}}^{\infty }(\To)$ in Theorem \ref{thm:egorov} is large enough so that as an immediate 
consequence  we
have:

\begin{cor}\label{cor:q-lim-inv}
Let us denote by $\mathcal{M}_{\mathrm{inv}}(F)$ the convex set of
F-invariant probability measures on $\To$, and by $\mathcal{M}_{%
\mathrm{qlim}}(U_{N})$ the set of quantum limits $\mu $ of $U_{N}$ with $\mu
(\mathbb{S})=0$, then 
\begin{equation*}
\mathcal{M}_{\mathrm{qlim}}(U_{N})\subset \mathcal{M}_{\mathrm{inv}}(F)
\end{equation*}
\end{cor}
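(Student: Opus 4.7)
The plan is to combine Egorov's theorem (Theorem \ref{thm:egorov}) with the elementary fact that diagonal matrix elements of an eigenvector are invariant under conjugation by its unitary. Fix $\mu\in\cM_{\qlim}(U_N)$ with $\mu(\bS)=0$ and a realising sequence $\psi_{k_j}^{N_j}$, and write $\lambda_j:=\ue_{N_j}(\theta_{k_j}^{N_j})$ so that $U_{N_j}\psi_{k_j}^{N_j}=\lambda_j\psi_{k_j}^{N_j}$ with $|\lambda_j|=1$. The starting point is the trivial identity
\[
\langle \psi_{k_j}^{N_j}, \Op[a]\psi_{k_j}^{N_j}\rangle
= \langle \psi_{k_j}^{N_j}, U_{N_j}^{*}\Op[a]U_{N_j}\psi_{k_j}^{N_j}\rangle,
\]
valid for every observable $a$.

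Next I would specialise to $a\in C_{\bS}^\infty(\To)$. Because $a$ vanishes in a neighbourhood of $F(\bS)$, the composition $a\circ F$ is smooth — the discontinuities of $F$ sitting above $\bS$ are annihilated by the support condition on $a$ — so $\Op[a\circ F]$ is well-defined and $\langle \psi_{k_j}^{N_j}, \Op[a\circ F]\psi_{k_j}^{N_j}\rangle\to\int a\circ F\,\ud\mu$ by the definition of quantum limit. Theorem \ref{thm:egorov} provides $\|U_{N_j}^{*}\Op[a]U_{N_j}-\Op[a\circ F]\|\le C(a)\delta_{N_j}$, and since $\delta_{N_j}\to 0$ for a proper quantisation, passing to the limit $j\to\infty$ in the identity above yields
\[
\int_{\To} a\,\ud\mu=\int_{\To} a\circ F\,\ud\mu\qquad\text{for all }a\in C_{\bS}^\infty(\To).
\]

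The main (albeit minor) obstacle is upgrading this to all of $C(\To)$, i.e.\ concluding $F_{*}\mu=\mu$ as Borel measures rather than as functionals on the proper subspace $C_{\bS}^\infty(\To)$, which is \emph{not} dense in $C(\To)$ in supremum norm. My approach is to exploit that $\bS$, and hence $F(\bS)$, is nowhere dense: bump functions in $C_{\bS}^\infty(\To)$ separate relatively open subsets of the dense open set $\To\setminus F(\bS)$, so by inner regularity of the Radon measures $\mu$ and $F_{*}\mu$ the two coincide on every Borel subset of $\To\setminus F(\bS)$. Since both are probability measures, the residual masses on $F(\bS)$ must also agree, giving $F_{*}\mu=\mu$. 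The hypothesis $\mu(\bS)=0$ is what makes $F$-invariance a meaningful condition at all, since $F$ is only defined off $\bS$; with that in place the above yields $\mu\in\cM_{\inv}(F)$.
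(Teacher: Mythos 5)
Your overall route is exactly the one the paper intends --- the paper offers no separate argument, presenting the corollary as an immediate consequence of Theorem \ref{thm:egorov}: the eigenfunction identity, the Egorov bound for $a\in C_{\bS}^{\infty}(\To)$ combined with $\delta_{N}\to 0$ and $\varepsilon_{N}\to 0$ for a proper quantisation, and the fact that this test class is rich because the singular set is nowhere dense. Up to and including the identity $\int a\,\ud\mu=\int a\circ F\,\ud\mu$ for all $a\in C_{\bS}^{\infty}(\To)$ your argument is complete and correct (including the observation that $a\circ F$ is smooth, so the definition of quantum limit may legitimately be applied to it).

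The final step, however, does not follow as written. From agreement of $\mu$ and $F_{*}\mu$ on all Borel subsets of the complement of $\overline{F(\bS)}$ you may only conclude that the two measures give the same \emph{total} mass to $\overline{F(\bS)}$; equal residual masses do not force the restrictions of the two measures to that set to coincide, so ``the residual masses on $F(\bS)$ must also agree, giving $F_{*}\mu=\mu$'' is a non sequitur unless that common mass is zero. The hypothesis at your disposal is $\mu(\bS)=0$, which controls $\bS$ but not $F(\bS)$: for a cutting-and-stacking map the limit values of $f$ at singular points need not lie in $\bS$ (the paper only records that all such boundary points lie on singular orbits $\cup_{k}f^{k}\bS$), and you also tacitly assume $F(\bS)$ is closed and nowhere dense when speaking of the dense open set $\To\setminus F(\bS)$. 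To close the argument you must either (i) verify $\mu\bigl(\overline{F(\bS)}\bigr)=0$ --- for instance by checking $\overline{F(\bS)}\subset\bS$ for the maps at hand, or by the reasoning of Proposition \ref{prop measure} showing the measures in question do not charge singular orbits --- or (ii) interpret $F$-invariance in the weaker sense of the test-function identity on $C_{\bS}^{\infty}(\To)$, which is how the paper implicitly uses the corollary. This is precisely the point the paper glosses over, but your write-up asserts the full equality $F_{*}\mu=\mu$ and therefore needs one of these repairs.
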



So we only have to look at invariant measures as candidates for quantum
limits. In the simplest case that there is only one invariant probability
measure, i.e., that the system is uniquely ergodic, all eigenfunctions must
converge to this measure, and we have the so called unique quantum
ergodicity. This was the situation in the example of Marklof and Rudnick, \cite{mr}.

We will study now the relationship between properties of quantum limits and the 
density of subsequences of eigenfunctions converging to them more closely. 
Our first result gives an upper bound on the density. 


\begin{thm}\label{thm:upper-bound-density}
Let $U_{N}$ be a proper quantisation of $F$ and let $\mu $ be a quantum limit of $U_{N}$ with support $\Sigma
\subset \To$. Then any sequence of eigenfunctions which converge to 
$\mu $ has at most density $\mu _{\To}(\Sigma )$, where $\mu _{\To}$ is the Lebesgue measure on $\To$.
\end{thm}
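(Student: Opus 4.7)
My plan is to test $\mu$ against a smooth observable $a$ that is a cutoff of a small open neighbourhood of the support $\Sigma$, and use the Szeg\"o-type trace identity \eqref{eq:szegoe} to convert the density hypothesis into an upper bound in terms of $\int_\To a\,\ud z$, which by construction is arbitrarily close to $\mu_\To(\Sigma)$. A simple Cauchy--Schwarz estimate will bridge the individual diagonal matrix elements and the trace.

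Concretely, since $\Sigma = \mathrm{supp}\,\mu$ is closed, outer regularity of Lebesgue measure furnishes, for each $\varepsilon > 0$, an open neighbourhood $V \supset \Sigma$ with $\mu_\To(V) \le \mu_\To(\Sigma) + \varepsilon$; a smooth Urysohn function then yields $a \in C^\infty(\To)$ with $0 \le a \le 1$, $a \equiv 1$ on some neighbourhood of $\Sigma$, and $\mathrm{supp}\,a \subset V$. Consequently $\int_\To a\,\ud z \le \mu_\To(\Sigma) + \varepsilon$ and $\int_\To a\,\ud\mu = 1$. Writing $S_N := \{k_j : N_j = N\}$ for the ``good'' indices at level $N$, the convergence $\langle \psi_{k_j}^{N_j}, \Op[a] \psi_{k_j}^{N_j}\rangle \to 1$ together with $N_j \to \infty$ gives, for every $\eta > 0$ and all sufficiently large $N$, the uniform bound $\langle \psi_k^N, \Op[a] \psi_k^N\rangle \ge 1 - \eta$ for every $k \in S_N$, while $|S_N|/N \to \alpha$ by the density hypothesis \eqref{eq:def-density}.

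The self-adjointness of $\Op[a]$ yields $\big(\langle \psi_k^N, \Op[a] \psi_k^N\rangle\big)^2 \le \langle \psi_k^N, \Op[a]^2 \psi_k^N\rangle$, which summed over $k$ gives
\begin{equation*}
(1-\eta)^2\,\frac{|S_N|}{N} \;\le\; \frac{1}{N}\sum_{k \in S_N}\big(\langle \psi_k^N, \Op[a] \psi_k^N\rangle\big)^2 \;\le\; \frac{1}{N}\tr \Op[a]^2.
\end{equation*}
The toral Weyl composition formula $\Op[a]^2 = \Op[a^2] + O(1/N)$, combined with \eqref{eq:szegoe} applied to $a^2$, gives $\tfrac{1}{N}\tr \Op[a]^2 \to \int_\To a^2\,\ud z$, and from $0 \le a \le 1$ we have $a^2 \le a$; hence
\begin{equation*}
\alpha(1-\eta)^2 \;\le\; \int_\To a^2\,\ud z \;\le\; \int_\To a\,\ud z \;\le\; \mu_\To(\Sigma) + \varepsilon.
\end{equation*}
Letting first $\eta \to 0$ and then $\varepsilon \to 0$ yields $\alpha \le \mu_\To(\Sigma)$.

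The only delicate ingredient is the composition estimate $\Op[a]^2 = \Op[a^2] + O(1/N)$ (in a sense strong enough that the error has trace $o(N)$). It is a short computation from \eqref{eq:def-weyl} and the Heisenberg commutation of the $T_N(n)$, but must be recorded explicitly. One could alternatively avoid it by invoking the sharp G\aa rding inequality $\Op[a] \ge -C/N$ for $a \ge 0$, which lets one compare $\tfrac{1}{N}\tr \Op[a]$ directly with $\tfrac{1}{N}\sum_{k \in S_N}\langle \psi_k^N, \Op[a] \psi_k^N\rangle$ and apply \eqref{eq:szegoe} to $a$ itself.
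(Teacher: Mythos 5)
Your proposal is correct, and its skeleton is the same as the paper's: test the sequence against a smooth cutoff $a$ which equals $1$ on $\Sigma$ and is supported in a small neighbourhood, use that the diagonal matrix elements along the sequence tend to $\int_{\To} a\,\ud\mu=1$, compare with the normalised trace via \eqref{eq:szegoe}, and finally shrink the neighbourhood. Where you genuinely deviate is in the bridging inequality. The paper bounds the sum of $\la\psi,\Op[a_\varepsilon]\psi\ra$ over the distinguished eigenfunctions at level $N$ simply by the full sum $\frac1N\sum_{k=1}^N\la\psi^N_k,\Op[a_\varepsilon]\psi^N_k\ra=\frac1N\tr\Op[a_\varepsilon]$, i.e.\ it discards the remaining diagonal terms; this implicitly uses that those terms are nonnegative, which for the Weyl quantisation of a nonnegative symbol holds only up to an $o(1)$ error (a sharp G\aa rding/anti-Wick type input the paper does not spell out, though it suffices in the limit). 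You instead square via Cauchy--Schwarz, so the discarded terms $\la\psi,\Op[a]^2\psi\ra=\|\Op[a]\psi\|^2$ are exactly nonnegative, and you pay with the extra ingredient $\frac1N\tr\Op[a]^2\to\int_{\To}a^2\,\ud z$, which is not stated in the paper (only \eqref{eq:szegoe} for a single operator is) but which, as you correctly flag, follows by a short computation from \eqref{eq:def-weyl} and the composition law of the $\mathrm{T}_N(n)$: the pairs $m=-n$ contribute $\sum_n|\hat a_n|^2=\int a^2\,\ud z$, while the terms with $n+m\equiv 0 \bmod N$, $m\neq -n$, are $O(N^{-\infty})$ by rapid decay of the Fourier coefficients, so the error even has trace $O(1)$. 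In short, your route is slightly longer but makes the positivity issue explicit (and your suggested G\aa rding alternative is exactly what would be needed to make the paper's one-line inequality rigorous as stated), while the paper's route avoids any composition estimate. Two minor points: phrase the conclusion with $\limsup_N |S_N|/N$ so that nothing is lost when the density limit fails to exist (the claim is vacuous when the density is $0$, so you may assume eigenfunctions occur at arbitrarily large $N$), and note that your "uniform bound for all $k\in S_N$ at large $N$" argument uses that the finitely many exceptional indices $j$ involve only finitely many values of $N_j$ -- worth one sentence, but correct.
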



\begin{proof}
Let $a_{\varepsilon}\in C^{\infty}(\To)$, $\varepsilon\in (0,1]$, be a sequence satisfying 
$a_{\varepsilon}|_{\Sigma}=1$ and $\lim_{\varepsilon\to 0} a_{\varepsilon}(z)=0$ for all 
$z\in \To\backslash \Sigma$, i.e., a sequence approximating the characteristic function of 
$\Sigma$.  If $\cF=\{\psi^{N_j}_{k_j}\}_{j\in\N}$ is a sequence of eigenfunctions with 
$\mu$ as quantum limit, and $\cF_N:=\{\psi^{N_j}_{k_j}\, ;\, N_j=N\}$, then 
\begin{equation}
\lim_{j\to\infty}\la \psi^{N_j}_{k_j},\Op[a_{\varepsilon}]\psi^{N_j}_{k_j}\ra=1
\end{equation}
and therefore 
\begin{equation}
\begin{split}
\alpha(\cF)&=\lim_{N\to\infty}\frac{1}{N}\sum_{\psi\in \cF_N}
 \la \psi ,\Op[a_{\varepsilon}]\psi \ra\\
&
\leq 
\lim_{N\to\infty}\frac{1}{N}\sum_{k=1}^N
 \la \psi^{N}_{k},\Op[a_{\varepsilon}]\psi^{N}_{k}\ra
=\int_{\To}a_{\varepsilon}\,\ud \mu_{\To}\,\, ,
\end{split}
\end{equation}
where we have used \eqref{eq:szegoe}. We now take the limit $\varepsilon\to 0$ and the 
theorem follows.  
\end{proof}

Since $\mu_{\To}(\bS)=0$ it follows in particular that a possible sequence of eigenfunctions
converging to a quantum limit concentrated on $\bS$ must have density $0$. 
This result is as well interesting for non-ergodic maps, because it gives an upper bound on the 
number of eigenfunctions whose quantum limits are supported on an invariant subset $\Sigma$ of $\To$
by the volume of $\Sigma$. 

In case that the system is ergodic, we can actually determine the quantum limit of 
most eigenfunctions.


\begin{thm}\label{thm:qe}
Let $U_{N}$ be a proper quantisation of $F$ and assume that $\mu _{\To}$ is ergodic. Then there exists a
subsequence of eigenfunctions of density one which converges to $\mu _{\To}$.
\end{thm}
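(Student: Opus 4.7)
The plan is to establish the standard quantum-ergodicity variance estimate
$$
\lim_{N \to \infty} \frac{1}{N} \sum_{k=1}^N \bigl|\la \psi_k^N, \Op[a]\psi_k^N \ra - \bar a\bigr|^2 = 0, \qquad \bar a := \int_{\To} a\, \ud \mu_{\To},
$$
for every $a \in C^{\infty}(\To)$, and then to invoke the usual Koopman--von Neumann / diagonal extraction argument, applied to a countable $\|\cdot\|_\infty$-dense family in $C(\To)$, to produce a density-one subsequence of eigenfunctions whose Wigner distributions converge to $\mu_{\To}$. This is the Shnirelman--Zelditch--Colin de Verdi\`ere strategy transported to the present setting of proper quantisations of piecewise affine maps.

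To prove the variance estimate, I would first exploit that $U_N \psi_k^N$ is a scalar multiple of $\psi_k^N$ together with the exact Egorov identity $(U_N^*)^j \Op[a] U_N^j = \Op[a \circ F_N^j]$ from the first part of Theorem \ref{thm:egorov} to obtain, for every $T \in \N$,
$$
\la \psi_k^N, \Op[a - \bar a]\psi_k^N\ra = \la \psi_k^N, \Op\bigl[a_T^{(N)} - \bar a\bigr]\psi_k^N\ra, \qquad a_T^{(N)} := \frac{1}{T}\sum_{j=0}^{T-1} a \circ F_N^j.
$$
Cauchy--Schwarz then yields the Hilbert--Schmidt bound $\frac{1}{N}\sum_k |\la \psi_k^N, \Op[b^{(N)}_T]\psi_k^N\ra|^2 \leq \frac{1}{N}\tr\bigl(\Op[b^{(N)}_T]^*\Op[b^{(N)}_T]\bigr)$ with $b^{(N)}_T := a_T^{(N)} - \bar a$, and the orthogonality of the phase-space translation operators $T_N(n)$ combined with Parseval gives $\frac{1}{N}\tr(\Op[b]^*\Op[b]) \to \|b\|_{L^2(\To)}^2$ for smooth symbols $b$, in the same spirit as \eqref{eq:szegoe}. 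Combined with the von Neumann mean ergodic theorem --- which, by ergodicity of $\mu_{\To}$, gives $\|a_T - \bar a\|_{L^2(\mu_{\To})} \to 0$ as $T \to \infty$, where $a_T := \frac{1}{T}\sum_{j=0}^{T-1} a \circ F^j$ --- the estimate closes upon sending first $N \to \infty$ at fixed $T$ and then $T \to \infty$.

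The main technical obstacle is the comparison between the discrete Birkhoff average $a_T^{(N)}$ and its classical counterpart $a_T$, which is needed to identify the limiting Hilbert--Schmidt norm. The discontinuity set $\bigcup_{j=0}^{T-1} F^{-j}(\bS)$ of $a_T$ may be dense in $\To$, but it has Lebesgue measure zero since $\bS$ is nowhere dense and $F$ preserves $\mu_{\To}$. The second part of Theorem \ref{thm:egorov}, together with the properness assumption $\varepsilon_N, \delta_N \to 0$, guarantees that $F_N^j(Q/N) = F^j(Q/N)$ for all $Q/N$ lying outside a neighbourhood of this singular set whose Lebesgue measure tends to zero as $N \to \infty$; combined with the uniform bound $\|a_T^{(N)}\|_\infty \leq \|a\|_\infty$ and dominated convergence, this gives $\limsup_N \|b^{(N)}_T\|_{L^2}^2 \leq \|a_T - \bar a\|_{L^2}^2$ for each fixed $T$. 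Since the two limits are taken in the order $N \to \infty$ first and $T \to \infty$ second, no Ehrenfest-type quantitative control on a joint rate $T = T(N)$ is required, and the argument goes through uniformly in the choice of approximating sequence $f_N$.
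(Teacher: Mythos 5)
Your strategy is the classical Shnirelman--Zelditch--Colin de Verdi\`ere variance argument, which is genuinely different from the proof in the paper: there the result is obtained without any time averaging, by combining the trace asymptotics \eqref{eq:szegoe} for a \emph{fixed smooth} symbol with the characterisation of ergodicity as extremality of $\mu_{\To}$ in $\mathcal{M}_{\mathrm{inv}}(F)$ (any positive-density subsequence whose averaged matrix elements converged to something other than $\overline{a}$ would yield a nontrivial convex decomposition of $\mu_{\To}$ into invariant measures). That route was chosen precisely because it avoids the steps where your plan runs into trouble in the presence of singularities.

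The gaps are concentrated in your treatment of the time-averaged symbol $a_T^{(N)}=\frac{1}{T}\sum_{j<T}a\circ F_N^j$. First, the exact Egorov identity in Theorem \ref{thm:egorov} is stated for $a\in C^{\infty}(\To)$; to iterate it you must apply it to $a\circ F_N$, which is discontinuous and, worse, is not a function on $\To$ at all ($F_N$ is only defined for $q$ in the lattice $D_N$), so the Weyl quantisation \eqref{eq:def-weyl} of $a\circ F_N^j$ is not defined by the paper and the iterated identity needs a separate (direct) verification. Second, and more seriously, the identification $\frac{1}{N}\tr\bigl(\Op[b_T^{(N)}]^*\Op[b_T^{(N)}]\bigr)\to\|a_T-\overline{a}\|_{L^2}^2$ does not follow from \eqref{eq:szegoe} or from ``orthogonality plus Parseval'': those arguments require rapidly decaying Fourier coefficients, whereas $a\circ F_N^j$ is an $N$-dependent, discontinuous symbol depending on both $p$ and $q$ (through the shear), whose Fourier modes extend far beyond the fundamental cell and produce aliasing contributions that must be estimated. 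Third, your claimed bridge to the classical average is false as stated: properness gives only $|f_N(Q/N)-f(Q/N)|\leq\delta_N$ and only on $I_{\varepsilon_N}$, i.e.\ away from $f(\bS)$ --- it never gives exact equality $F_N^j(Q/N)=F^j(Q/N)$; to control $j$-fold compositions you must track the propagation of the $O(\delta_N)$ error through $T$ steps while the orbit avoids a neighbourhood of $\bigcup_{j<T}F^{-j}(\bS)$, and to make that exceptional set small you need $\mu_{\To}(\bS)=0$, which is not part of the definition of a proper quantisation ($\bS$ is only assumed nowhere dense, and a nowhere dense closed set can have positive measure). None of these steps is hopeless for the specific cutting-and-stacking maps, but each is a substantive piece of work you have not supplied, and avoiding exactly this package of estimates is the point of the convexity proof given in the paper, which only ever quantises fixed smooth observables supported away from $F(\bS)$.
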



This is the usual quantum ergodicity result, but  our
proof differs from the standard one (see e.g. \cite{MG}) in that we rely on
the convexity definition of ergodicity, this is more convenient when dealing
with maps with singularities as has been observed in \cite{GL}. Recall that 
$\mu _{\To}$ is ergodic if it is extremal in the convex set of
invariant probability measures, i.e., if $\mu _{\To}=\alpha \mu
_{1}+(1-\alpha )\mu _{2}$ with $\mu _{2}\neq \mu _{\To}$ then $\alpha
=1$ and $\mu _{1}=\mu _{\To}$.

\begin{proof}
The existence of a subsequence $\cF=\{\psi^{N_j}_{k_j}\}_{j\in\N}$ of
 density one of eigenfunctions with quantum limit $\mu_{\To}$  is equivalent to 
\begin{equation}
\lim_{N\to\infty}\frac{1}{N}\sum_{k=1}^{N}
 |\la \psi^{N}_{k},\Op[a]\psi^{N}_{k}\ra -\overline{a}|=0\,\, ,
\end{equation}
see \cite{MG}. We first observe that by ergodicity every subsequence $\cF=\{\psi^{N_j}_{k_j}\}_{j\in\N}$ 
of positive density satisfies 
\begin{equation}\label{eq:mean-subsequence}
\lim_{N\to\infty}\frac{1}{|\cF_N|}\sum_{\psi\in\cF_N}\la \psi ,\Op[a]\psi \ra
=\overline{a}\,\, .
\end{equation}
To see this we consider the sequence   
\begin{equation}
a_N:=\frac{1}{|\cF_N|}\sum_{\psi\in\cF_N}\la \psi ,\Op[a]\psi \ra\,\, ,
\end{equation}
this is a bounded sequence since $\Op[a]$ is bounded, and 
therefore there exists a convergent subsequence 
$\{a_{N_j}\}_{j\in\N}$. Now using \eqref{eq:szegoe} we have with a convergent subsequence 
\begin{equation}\label{eq:sequence-decomp}
\begin{split}
\overline{a}&=\lim_{j\to\infty}\frac{1}{N_j}\sum_{k=1}^{N_j} \la \psi^{N_j}_{k},\Op[a]\psi^{N_j}_{k}\ra\\
&=\lim_{j\to\infty}\frac{|\cF_{N_j}|}{N_j}\frac{1}{|\cF_{N_j}|}\sum_{\psi\in\cF_{N_j}}\la \psi,\Op[a]\psi \ra\\
&\quad +\lim_{j\to\infty}\frac{N_j-|\cF_{N_j}|}{N_j}\frac{1}{N_j-|\cF_{N_j}|}\sum_{\psi_k^{N_j}\in \cH_{N_j}\backslash\cF_{N_j}}\la \psi^{N_j}_{k},\Op[a]\psi^{N_j}_{k}\ra\\
&= \alpha(\cF)\int a\,\, \ud \mu_1+(1-\alpha(\cF))\int a\,\, \ud \mu_2
\end{split}
\end{equation}
where $\mu_1$ and $\mu_2$ are invariant measures defined by 
\begin{align}
\lim_{j\to\infty}
\frac{1}{|\cF_{N_j}|}\sum_{\psi\in\cF_{N_j}}\la \psi,\Op[a]\psi \ra
&=\int a\,\, \ud \mu_1\\
\lim_{N_j\to\infty}\frac{1}{N_j-|\cF_{N_j}|}\sum_{\psi_k^{N_j}\in \cH_{N_j}\backslash\cF_{N_j}}\la \psi^{N_j}_{k},\Op[a]\psi^{N_j}_{k}\ra
&=\int a\,\, \ud \mu_2
\end{align}
These two measures exist by the assumption that the subsequence $\{a_{N_j}\}_{j\in\N}$ 
was convergent, and they are invariant by Theorem \ref{thm:egorov}. 
But equation \eqref{eq:sequence-decomp} can be rewritten as 
\begin{equation}
\mu=\alpha \mu_1+(1-\alpha)\mu_2
\end{equation}
and if $\mu$ is ergodic and $\alpha\neq 0$ this is only possible if $\mu_1=\mu$, 
and this proves that 
\begin{equation}
\lim_{j\to\infty} a_{N_j}=\overline{a}\,\, .
\end{equation}
Since this holds for every convergent subsequence of $\{a_N\}_{N\in\N}$ 
$\overline{a}$ is the only limit point and 
\eqref{eq:mean-subsequence} follows.  

Now assume that 
\begin{equation}
\lim_{N\to\infty}\frac{1}{N}\sum_{k=1}^{N}
 |\la \psi^{N}_{k},\Op[a]\psi^{N}_{k}\ra -\overline{a}|=
C>0\,\, ,
\end{equation}
then there must either exists a subsequence $\{ k_j\}_{j\in \N}$ 
of positive density with  
\begin{equation}
\la \psi^{N_j}_{k_j},\Op[a]\psi^{N_j}_{k_j}\ra-\overline{a}\geq C/2
\end{equation}
or one with 
\begin{equation}
\la \psi^{N_j}_{k_j},\Op[a]\psi^{N_j}_{k_j}\ra-\overline{a}\leq -C/2\,\, .
\end{equation}
But the mean value 
of the sequence 
$\la \psi^{N_j}_{k_j},\Op[a]\psi^{N_j}_{k_j}\ra-\overline{a}$ 
must tend to $0$ by \eqref{eq:mean-subsequence} and so we 
have a contradiction if $C\neq 0$. 
\end{proof}

The previous results, 
Corollary \ref{cor:q-lim-inv}, Theorem \ref{thm:upper-bound-density} and Theorem \ref{thm:qe}, 
are quite general, they are valid for all quantised maps which satisfy Egorov's theorem. 
We now turn to a more concrete study of the eigenfunctions for the specific quantum maps 
\eqref{eq:defof-quantization}. Our aim is to
show that the quantum limits are determined by the spatial distribution of
the periodic orbits of the discretisation of the classical map. The
eigenvalue equation 
\begin{equation*}
U_{N}\psi =\mathrm{e}_{N}(\theta )\psi
\end{equation*}
leads to the following recursion equation for $\psi $ 
\begin{equation}\label{eq:recursion}
\Psi (\hat{f}_{N}(Q))=\mathrm{e}_{N}(\theta -Q^{2})\Psi (Q)\,\,.
\end{equation}
From this recursion relation we obtain 
\begin{equation*}
|\Psi (\hat{f}_{N}(Q))|^{2}=|\Psi (Q)|^{2}\,\,,
\end{equation*}
and this implies that the probability densities in position space defined by the eigenfunctions are
invariant under the map $f_{N}$. In order to determine these densities it is
therefore sufficient to determine the spatial distribution of the orbits of 
$f_{N}$.

For the further investigation we note that each periodic orbit of $f_{N}$
carries at least one eigenfunction. And we can determine the eigenfunctions
and eigenvalues more explicitly, let $\mathcal{O}$ be an periodic orbit of
period $|\mathcal{O}|=K$, then the recursion relation \eqref{eq:recursion} gives 
\begin{equation*}
\Psi (Q)=\mathrm{e}_{N}\bigg(K\theta -\sum_{k=0}^{K-1}\big[\hat{f}
_{N}^{k}(Q)]^{2}\bigg)\Psi (Q)\,\,.
\end{equation*}
So if $\psi $ should be an eigenfunction with eigenvalue $\mathrm{e}%
_{N}(\theta )$ we get the condition 
\begin{equation*}
K\theta -\sum_{k=0}^{K-1}\big[\hat{f}_{N}^{k}(Q)]^{2}=Nm
\end{equation*}
with $m\in \{0,1,\cdots ,K-1\}$. This determines the eigenvalues, and then
the corresponding eigenfunctions follow from the recursion relation and the
normalisation condition. Summarising we get:


\begin{prop}
Let $\mathcal{O}$ be an orbit of period $K=|\mathcal{O}|$ of $f_{N}$, then
there exists $K$ orthogonal eigenfunctions of $U_{N}$ with support $\mathcal{%
O}$. The eigenphases are given by 
\begin{equation*}
\theta _{k}=S_{\mathcal{O}}+\frac{N}{K}\,k
\end{equation*}
with $k\in \{0,1,\cdots ,K-1\}$ and 
\begin{equation*}
S_{\mathcal{O}}:=\frac{1}{K}\sum_{k^{\prime }=0}^{K-1}\big[\hat{f}%
_{N}^{k^{\prime }}(Q)]^{2}\,\,,
\end{equation*}
and a normalised eigenfunction corresponding to $\theta _{k}$ is given by 
\begin{equation*}
\Psi _{k}(\hat{f}_{N}^{k^{\prime }}(Q_{0}))=\mathrm{e}_{N}\bigg(k^{\prime
}\theta _{k}-\sum_{m=0}^{k^{\prime }}[\hat{f}_{N}^{m}(Q_{0})]^{2}\bigg) %
\left( \frac{N}{K}\right) ^{1/2}
\end{equation*}
where $Q_{0}\in \mathcal{O}$ is an arbitrary point on the orbit and $%
k^{\prime }\in \{0,1,\cdots ,K-1\}$.
\end{prop}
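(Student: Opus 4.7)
The plan is to take the recursion relation \eqref{eq:recursion}, iterate it along the orbit $\mathcal{O}$, impose the closure condition after $K$ steps to pin down the admissible eigenphases, and then verify normalisation and orthogonality by a direct computation with geometric sums.

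First I would set $Q_0\in\mathcal{O}$ and, starting from the eigenvalue equation $U_N\psi=\mathrm{e}_N(\theta)\psi$, derive (by induction on $k'$) from \eqref{eq:recursion} the explicit formula
\begin{equation*}
\Psi\bigl(\hat{f}_N^{k'}(Q_0)\bigr)=\mathrm{e}_N\!\biggl(k'\theta-\sum_{m=0}^{k'-1}\bigl[\hat{f}_N^{m}(Q_0)\bigr]^{2}\biggr)\Psi(Q_0).
\end{equation*}
(The summation range in the proposition may be read accordingly; what matters is the net phase accumulated by going $k'$ times around the cocycle.) Declaring $\Psi$ to vanish off $\mathcal{O}$ is consistent with the recursion because $f_N$ maps $\mathcal{O}$ to itself and its complement to its complement.

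Next I would impose the closure condition: setting $k'=K$ and using $\hat{f}_N^{K}(Q_0)=Q_0$ gives
\begin{equation*}
\Psi(Q_0)=\mathrm{e}_N\!\biggl(K\theta-\sum_{m=0}^{K-1}\bigl[\hat{f}_N^{m}(Q_0)\bigr]^{2}\biggr)\Psi(Q_0),
\end{equation*}
so $K\theta\equiv\sum_{m=0}^{K-1}[\hat{f}_N^{m}(Q_0)]^{2}\pmod{N}$. Solving this congruence yields exactly $K$ distinct values of $\theta$ in $[0,N)$, namely $\theta_k=S_{\mathcal{O}}+\tfrac{N}{K}k$ for $k=0,1,\dots,K-1$, which is the asserted list of eigenphases. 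For each such $\theta_k$ I would then define $\Psi_k$ by the explicit formula in the statement, picking $|\Psi_k(Q_0)|=(N/K)^{1/2}$. That this indeed defines an eigenfunction of $U_N$ on the orbit is immediate from how it was constructed; that $\|\Psi_k\|_N=1$ follows from $\|\Psi_k\|_N^2=\frac{1}{N}\sum_{Q\in\mathcal{O}}|\Psi_k(Q)|^2=\frac{1}{N}\cdot K\cdot\frac{N}{K}=1$, since the modulus $(N/K)^{1/2}$ is preserved by the recursion.

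Finally, orthogonality for $k\neq k'$ reduces, after cancellation of the common magnitude and of the phase factor depending on $\sum_{m}[\hat{f}_N^{m}(Q_0)]^{2}$, to the identity
\begin{equation*}
\la\Psi_k,\Psi_{k'}\ra_N=\frac{1}{K}\sum_{j=0}^{K-1}\mathrm{e}_N\!\bigl(j(\theta_{k'}-\theta_k)\bigr)=\frac{1}{K}\sum_{j=0}^{K-1}\mathrm{e}^{2\pi\mathrm{i} j(k'-k)/K}=\delta_{kk'},
\end{equation*}
which is the standard geometric sum on $\Z_K$. There is no real obstacle: the only subtle point is bookkeeping the quadratic phase so that the closure condition is correctly identified as a congruence modulo $N$ (rather than modulo $NK$ or similar), and making sure the summation indices in the eigenfunction formula match the convention of \eqref{eq:recursion}. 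Once this is done, existence, explicit form, and mutual orthogonality of the $K$ eigenfunctions all follow simultaneously.
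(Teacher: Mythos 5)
Your proposal is correct and takes essentially the same route as the paper: iterate the recursion \eqref{eq:recursion} around the orbit, impose the closure condition $K\theta-\sum_{m=0}^{K-1}[\hat{f}_{N}^{m}(Q_{0})]^{2}\equiv 0 \pmod N$ to get the $K$ eigenphases, and then define the eigenfunctions on $\mathcal{O}$ via the recursion and normalisation. Your explicit geometric-sum verification of orthogonality and your remark about the summation-index convention are harmless additions to what the paper leaves implicit.
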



The quantum lattice $D_N$ of $N$ points is a disjoint union of all periodic
orbits of $f_N$, and on each of these orbits are as many eigenfunctions
concentrated as the orbit is long. But that means that the orbits determine the 
quantum limits and the relative density of the corresponding sequence of
eigenfunctions.

To each periodic orbit $\mathcal{O}$ we can associate a probability measure
on $[0,1]$ 
\begin{equation}  \label{eq:deltaO}
\delta_{\mathcal{O}}(q):=\frac{1}{|\mathcal{O}|}\sum_{Q\in \mathcal{O}%
}\delta \bigg(q-\frac{Q}{N}\bigg)
\end{equation}
which is invariant under $f_N$.

\begin{cor}
Let $\mathcal{O}_{j}^{(N)}$, $j=1,\ldots ,J_{N}$ be the periodic orbits of $%
f_{N}$ and let $\delta _{j}^{(N)}$ be the corresponding probability measures %
\eqref{eq:deltaO}. Assume that there is an invariant measure $\nu $ of $f$
and a sequence of periodic orbits $\{\mathcal{O}_{j_{k}}^{(N_{k})}\}_{k\in 
\mathbf{N}}$ such that 
\begin{equation*}
\lim_{k\rightarrow \infty }\delta _{j_{k}}^{(N_{k})}=\nu \,\,,
\end{equation*}
then $\nu $ is a quantum limit of $U_{N}$.
\end{cor}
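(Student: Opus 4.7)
The plan is to read the corollary's conclusion ``$\nu$ is a quantum limit'' as: there exists a quantum limit $\mu$ of $U_N$ (a probability measure on $\To$) whose $q$-marginal $\pi_{q*}\mu$ equals $\nu$. This interpretation is forced by the fact that quantum limits in the paper are defined on $\To$ while $\nu$ lives on $[0,1]$. The proof then reduces to extracting an explicit subsequence of eigenfunctions from Proposition 2 and unpacking the diagonal matrix elements in the $q$-variable.

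First I would pick, for each $k$, one of the $|\mathcal{O}_{j_k}^{(N_k)}|$ orthonormal eigenfunctions of $U_{N_k}$ supported on $\mathcal{O}_{j_k}^{(N_k)}$ furnished by Proposition 2, and call it $\psi_k$. The explicit normalisation in that Proposition yields $|\Psi_k(Q)|^2 = N_k/|\mathcal{O}_{j_k}^{(N_k)}|$ for $Q\in \mathcal{O}_{j_k}^{(N_k)}$ and $|\Psi_k(Q)|^2=0$ otherwise, so that the position probability density induced by $\psi_k$ is, as a measure on $D_{N_k}$, exactly $\delta_{j_k}^{(N_k)}$.

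Next, Banach--Alaoglu applied to the uniformly bounded sequence of positive linear functionals $a\mapsto \langle \psi_k,\Op[a]\psi_k\rangle$ on $C(\To)$ yields a subsequence converging to a probability measure $\mu$ on $\To$, which is by construction a quantum limit of $U_N$. For any $a\in C^\infty(\To)$ depending only on $q$, the multiplication formula \eqref{eq:mult-op} gives
\[
\langle \psi_k,\Op[a]\psi_k\rangle=\frac{1}{N_k}\sum_{Q}a(Q/N_k)|\Psi_k(Q)|^2=\int a\,\mathrm{d}\delta_{j_k}^{(N_k)}\longrightarrow \int a\,\mathrm{d}\nu
\]
as $k\to\infty$ by the hypothesis $\delta_{j_k}^{(N_k)}\to\nu$, so $\pi_{q*}\mu=\nu$, which (under the reading above) is the assertion that $\nu$ is a quantum limit.

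I do not anticipate a genuine obstacle here. The whole content is that Proposition 2 makes the modulus-squared of the chosen eigenfunction exactly the normalised orbit measure, and the multiplication rule then transports the assumed convergence of orbit measures directly into convergence of matrix elements of $q$-observables. The harder question --- identifying $\mu$ itself (rather than merely its $q$-marginal) as the product measure $\mu_{[0,1]}\otimes\nu$ by showing that the off-diagonal matrix elements of $T_N(n)$ with $n_1\neq 0$ vanish in the limit --- is not required for this corollary and is the subject of the main theorem in the next section.
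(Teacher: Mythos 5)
Your argument is essentially the paper's own (implicit) one: the corollary is stated there without a separate proof, being immediate from Proposition 2 because an eigenfunction supported on an orbit $\mathcal{O}$ has position density exactly $\delta_{\mathcal{O}}$, so the assumed convergence of orbit measures transports, via the multiplication rule \eqref{eq:mult-op} and a compactness extraction, into convergence of the matrix elements of $q$-dependent observables. Your explicit handling of the $[0,1]$-versus-$\To$ mismatch through the $q$-marginal is in fact more careful than the paper, which glosses over the $p$-direction entirely (and, contrary to your closing remark, never carries out the product-measure identification in the later sections either); this does not affect the correctness of what you prove.
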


In all our examples the sequence $\left( N_{k}\right) $ contains all natural
numbers. Thus there are two possible definitions of the density of a
sequence of periodic orbits, $\mathcal{G}=\{\mathcal{O}_{j_{k}}^{(N_{k})}\}_{k\in 
\mathbf{N}}$, 
\begin{equation*}
\alpha (\mathcal{G})=\lim_{N\rightarrow \infty }\frac{|
\mathcal{O}_{j_{N}}^{(N)}|}{N}\qquad\text{or }\quad\beta (\mathcal{O}_{j_{k}}^{(N_{k})})
=\lim_{N\rightarrow \infty }\frac{1}{N}
\sum\limits_{k:N_{k}=N}|\mathcal{O}_{j_{k}}^{(N_{k})}|\,\, ,
\end{equation*}%
whenever the limit exists, which we will call the $\alpha$-density
or $\beta$-density of $\mathcal{G}$, respectively.

This corollary suggests that the set of quantum limits coincides with the
set of limits points of the sequence of orbit measures $\delta _{j}^{(N)}$
and that moreover the relative densities of the convergent subsequences
coincide too. This is  true if there are no multiplicities in the
eigenvalues. If there are eigenvalues of multiplicity larger than one, then
the eigenspace can mix the contribution of the different orbits. But even in this case 
there always  exists a choice of a basis of eigenfunctions corresponding to the orbit  
measures $\delta _{j}^{(N)}$.  Notice that in this case the 
$\beta$-density of the sequence of orbits  coincides with the density of the corresponding sequence of 
eigenfunctions defined in \eqref{eq:def-density}.

\section{Cutting and Stacking constructions}

\label{sec:cut-stack}

Cutting and stacking is a popular method in ergodic theory to construct maps
on the interval which are isomorphic models of arbitrary measure preserving
dynamical systems. The construction gives a piecewise isometric mapping on
the interval with Lebesgue measure as an invariant measure. One can also
think of this transformation as a countable interval exchange transformation.

The final mapping will be defined only Lebesgue almost everywhere. None the
less we can use this model to study certain other invariant measures which
are well behaved with respect to the cutting and stacking construction.

For a very readable introduction into cutting and stacking see the recent
book by Shields \cite{shields} or the old book by Friedman \cite{friedman}.
We will now give a short description of the basic construction scheme and
the relevant definitions.

A stack (or column) $S$ is a finite family of enumerated disjoint intervals $%
\{I_{j}\}_{j=1}^{h(S)}$ where $h(S)$ is called the height of $S$. The $I_{j}$
are subintervals of $[0,1]$ of equal length which is called the width of $S$. 
There are two possible conventions: either one can take all the intervals to
be open, or all the intervals to be closed on the left and open on the
right. Which convention we choose is not important for this paper.
The intervals $I_{1}$ and $I_{h(s)}$ are called the bottom and top of 
$S$ respectively. We define a transformation $f_{S}$ as follows, if the
point $x\in I_{j}$ is not on the top of $S$ and not on the boundary of $%
I_{j} $ then it gets mapped to the point directly above it (see Figure
1(a)). Since $I_{j+1}$ and $I_{j}$ have equal width, $f_{S}$ is simply the
canonical identification map between $I_{j}$ and $I_{j+1}$. Interpreted in $%
[0,1]$ this means that $f_{S}:I_{j}\rightarrow I_{j+1}$ such that $%
x\rightarrow x+\partial ^{-}I_{j+1}-\partial ^{-}I_{j}$ where $\partial ^{-}$
denotes the left boundary point of an interval. The construction clearly
defines $f^{-1}$ on all $I_{j}$ except at the bottom.

\begin{figure}[t]
\begin{center}
\includegraphics[width=14cm]{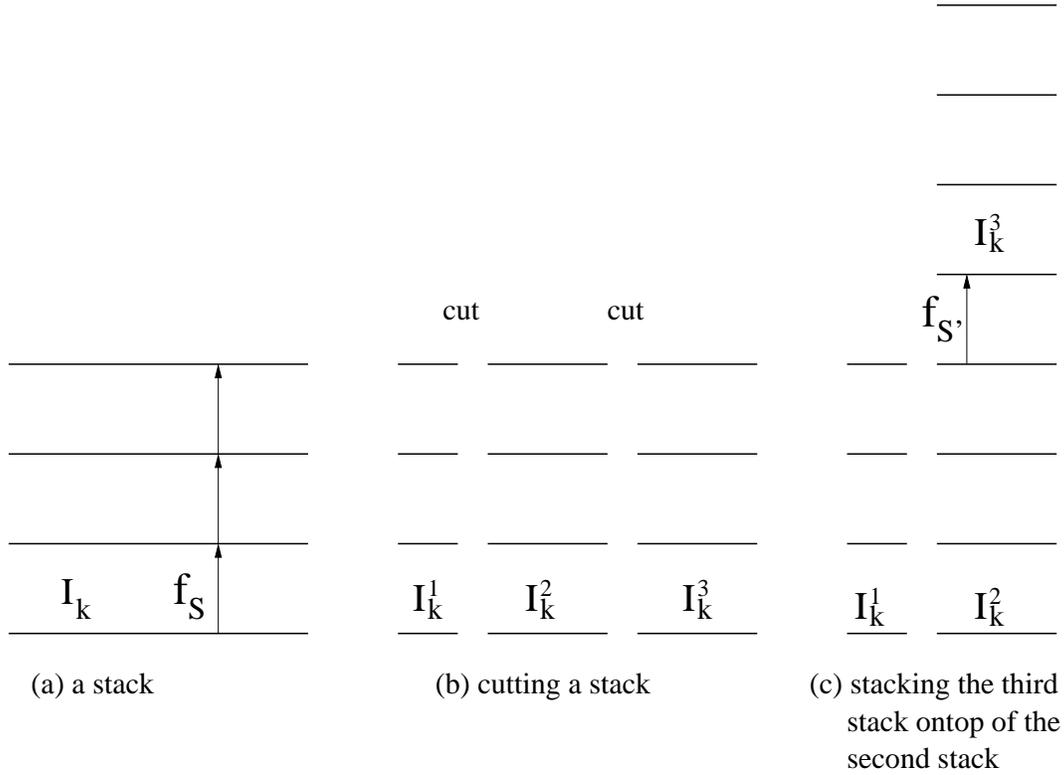}
\parbox{12cm}{\caption{ \footnotesize The cutting and stacking construction. In a given stack (a) the
mapping $f_{S}$ is defined, except at the top interval. In (b) the stack is
cut into three substacks, and in (c) the third substack is stacked onto the
second one. This gives an extension $f_{S^{\prime }}$ of the map $f_{S}$
which was not defined on the top of substack two before.} }
\label{fig:cuttingstacking}
\end{center}
\end{figure}

A stack family $\mathcal{S}$ is a finite or countable set of stacks $%
\{S_{i}\}=\{\{I_{j}^{i}\}_{j=1}^{h(S_{i})}\}$ such that all $I_{j}^{i}$ are
disjoint and $\cup I_{j}^{i}=[0,1]$. In this paper we will work only with
finite stack families. On $\mathcal{S}$ one defines a transformation $f_{%
\mathcal{S}}$ by $f_{\mathcal{S}}|_{S_{i}}=f_{S_{i}}$ except on the
collection of top intervals.

Given two stacks $S_{i}$ and $S_{j}$, $i\neq j$ of equal width one can
define a new stack $S^{\prime }$ by stacking $S_{j}$ on $S_{i}$ that is
\begin{eqnarray*}
S^{\prime } &=&\{I_{k}^{\prime }\}_{k=1}^{h(S_{i})+h(S_{j})} \\
\;\;\;\;I_{k}^{\prime } &=&I_{k}^{i}\text{ for }k\leq h(S_{i})\text{ and }%
I_{k}^{\prime }=I_{k-h(S_{i})}^{j}\text{ for }k>h(S_{i}).
\end{eqnarray*}

Correspondingly one gets a new transformation $f_{S^{\prime }}$ which agrees
with $f_{S_{i}}$ and $f_{S_{j}}$ except on $I^{i}_{h(S_{i})}$, where $%
f_{S_i} $ was not defined before.

It remains to define the cutting of stacks. A cutting of a stack $%
S=\{I_{k}\} $ is a splitting of $S$ into two (or more) disjoint stacks $%
S_{1} $and $S_{2}$ with intervals $\{I_{k}^{1}\}$ and $\{I_{k}^{2}\}$ such
that 
\begin{equation*}
I_{k}^{1}\cup I_{k}^{2}=I_{k}\;\text{ and }\partial ^{-}I_{k}^{1}<\partial
^{+}I_{k}^{1}=\partial ^{-}I_{k}^{2}<\partial ^{+}I_{k}^{2}\;\;\forall k
\end{equation*}%
that is $I_{k}^{1}$ is always the left component of the partition of $I_{k}$
into $I_{k}^{1}$ and $I_{k}^{2}$ (Figure 1(b)). The definition of $%
f_{\{S_{1},S_{2}\}}$ is as above. Multiple cutting of $S$ is defined
analogously.

A stack family $\mathcal{S}(n)$ is obtained from a stack family $\mathcal{S}%
(n-1)$ by cutting and stacking, if each $S_{i}(n)$ from $\mathcal{S}(n)$ can
be obtained by successive cuttings and stackings of stacks from $\mathcal{S}%
(n-1)$. By construction $f_{\mathcal{S}(n)}$ is an extension of $f_{\mathcal{%
S}(n-1)}$. If one has a sequence $\{\mathcal{S}(n)\}_{n\geq 1}$ of stack
families such that each $\mathcal{S}(n)$ is obtained from $\mathcal{S}(n-1)$
by cutting and stacking and furthermore

\begin{equation}
\lim_{n\rightarrow \infty }\sum_{S_{k}(n)\in \mathcal{S}(n)}\text{width}%
(S_{k}(n))=0,  \label{eq:width}
\end{equation}%
then $\lim f_{\mathcal{S}(n)}=f$ is an invertible transformation on $[0,1]$
defined everywhere except at a set of zero Lebesgue measure. Note that $f$
is always aperiodic.

The ``partition'' of $[0,1]$ into the intervals of $\mathcal{S}(1)$, 
\footnote{%
Here we can ignore the boundary points of the interval since $f$ is not
defined on them.} the starting object of the construction, gives a natural
symbolic dynamics for $f$. The coding is unique for all points whose
infinite orbit is defined.

For convenience, we denote the intervals of $\mathcal{S}(n)$ by $%
I_{j}^{i}(n) $ where $i$ indexes the stack and $j$ the interval in the
stack. We consider the set

\begin{equation*}
\mathbb{S}=\overline{\big(\cup _{i,n}\partial I_{h(S_{i})}^{i}(n)\big)\cup \big(\cup
_{i,n}\partial I_{1}^{i}(n)\big)}.
\end{equation*}

The set $\mathbb{S=S}\left( f\right) $ is called the singularity set of the
map $f$. It consists of all the points of discontinuity of $f$ and all the
points where the map $f$ or $f^{-1}$ is not defined. The boundary points of
the intervals $I_{j}^{i}$ which are not top or bottom intervals are not
included in this set, the map is defined and continuous on such points!
Furthermore $\cup _{i,j,n}\partial I_{j}^{i}(n)$ is included in the set $%
\cup _{k\in \mathbb{Z}}f^{k}\mathbb{S}$, where $f^{k}\mathbb{S}%
:=\{x:f^{-k}x\in \mathbb{S}\}$.

Historically the cutting and stacking construction was invented to represent
the dynamics with respect to a single invariant measure as a countable
interval exchange transformation with the canonical invariant Lebesgue
measure. The construction is universal in the sense that for every
measurable dynamical system $\left( M,g,\mu \right) $ one can explicitly
give a cutting and stacking representation $\left( \left[ 0,1\right]
,f\left( g\right) ,\mu _{L}\right) $ \cite{shields}. The following
proposition which will be needed for the application of Theorem 3 seemed to
be unknown.

\begin{prop}
\label{prop measure} Let $\left( \Sigma ,\sigma \right) $ be a symbolic
dynamical system over a finite or countable alphabet and let $\mu $ be a
shift-invariant measure such that $\mu \left( \left[ w\right] \right) >0$
for all cylinder-sets corresponding to finite words $w$ appearing in $\Sigma $%
. Then the associated cutting and stacking construction yields a
representation $\left( \left[ 0,1\right] ,f,\mu _{L}\right) $ of $\left(
\Sigma ,\sigma ,\mu \right) $ such that all nonatomic invariant measures $%
\nu ^{\ast }$ on $\left( \Sigma ,\sigma \right) $ have a corresponding
isomorphic invariant measure $\nu $ on $\left( \left[ 0,1\right] ,f\right) $
with the property $\nu \left( \mathbb{S}\left( f\right) \right) =0.$
\end{prop}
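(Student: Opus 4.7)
My plan is to realise the cutting-and-stacking construction so that stacks at every level correspond exactly to cylinders of $\Sigma$ with matching $\mu$-measures, use the induced symbolic coding to transport any invariant $\nu^{\ast}$ to the interval, and then verify $\nu(\mathbb{S}(f))=0$ by a symbolic analysis of which sequences are sent into $\mathbb{S}(f)$.

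For the construction itself I will fix a sequence of heights $h_{n}\to\infty$ and, at level $n$, introduce one stack $S_{w}(n)$ for each admissible word $w\in\Sigma$ of length $h_{n}$, with $\mathrm{width}(S_{w}(n))=\mu([w])$; the $j$-th interval of the stack represents points whose current symbol equals the $j$-th letter of $w$. The positivity assumption $\mu([w])>0$ for every admissible $w$ guarantees every stack has positive width. The refinement from level $n$ to level $n+1$ will be dictated by $[w]=\bigsqcup_{a}[wa]$ with cut proportions $\mu([wa])/\mu([w])$, and the stackings by matching consecutive symbols. Since $\sum_{w}\mu([w])=1$ and $h_{n}\to\infty$, condition \eqref{eq:width} holds automatically. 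The resulting map $f$ provides, through the position-in-stack data, a coding map $\pi\colon[0,1]\setminus E\to\Sigma$ defined off a countable set $E$ of interval endpoints; by construction $\pi_{\ast}\mu_{L}=\mu$ and $\pi\circ f=\sigma\circ\pi$ on the domain, giving the desired isomorphism $([0,1],f,\mu_{L})\cong(\Sigma,\sigma,\mu)$.

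Given a nonatomic shift-invariant $\nu^{\ast}$ on $\Sigma$, the countable set $\pi(E)\subset\Sigma$ has $\nu^{\ast}(\pi(E))=0$ by nonatomicity, so $\nu:=(\pi^{-1})_{\ast}\nu^{\ast}$ defines a Borel probability measure on $[0,1]$ which is $f$-invariant, concentrated on the domain of $\pi$, and measure-theoretically isomorphic to $\nu^{\ast}$. Writing $B:=\bigcup_{i,n}\bigl(\partial I_{1}^{i}(n)\cup\partial I_{h(S_{i})}^{i}(n)\bigr)$, so that $\mathbb{S}(f)=\overline{B}$, the countable set $B$ is $\nu$-null by nonatomicity, and the remaining task is to bound $\nu(\overline{B}\setminus B)$.

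The hard part will be this last step, since a priori the closure in $[0,1]$ of a countable set can be uncountable. My approach will be to identify $\pi(\overline{B}\setminus B)$ symbolically. A point $p\in\overline{B}\setminus B$ lies in the interior of some $I_{j}^{i}(n)$ for every $n$, so $\pi(p)$ is well-defined, and the density of $B$ near $p$ translates into the statement that $\pi(p)$ is approximated, in the product topology, by the codings of top- and bottom-endpoints of stacks of arbitrarily high level. Because the cutting and stacking at each level is dictated by cylinder refinement, the coding of such an endpoint is a sequence whose positions inside the stack agree with the associated admissible word and whose positions outside are either forced by the stacking rule or chosen from a finite set determined by the refinement at that level. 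Making this bookkeeping precise should show that $\pi(\overline{B}\setminus B)$ is contained in a countable union of $\sigma$-orbits of sequences whose structure is eventually constrained by the construction; since $\nu^{\ast}$ is nonatomic, it annihilates any such set, whence $\nu(\mathbb{S}(f))=0$. The technical crux is therefore to exploit the flexibility of the cutting-and-stacking scheme---by using only the canonical cylinder-adapted refinements at each level---to keep the symbolic structure of top/bottom endpoints simple enough that the closure of their codings under $\sigma$ is a countable $\sigma$-invariant set.
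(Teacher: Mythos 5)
There is a genuine gap at precisely the step you yourself flag as the crux. Your plan is to show that $\pi(\overline{B}\setminus B)$ is contained in a countable $\sigma$-invariant set and then invoke nonatomicity of $\nu^{\ast}$. This is neither proved in your sketch (it is deferred with ``should show'') nor plausible as a general mechanism: the singularity set $\mathbb{S}(f)=\overline{B}$ is typically an uncountable Cantor-type set --- in the paper's own full-shift example its box dimensions lie in $(1/2,1)$ --- and there is no argument that the points of $\overline{B}\setminus B$ possessing well-defined bi-infinite orbits have only countably many codings. Worse, the closure in $\Sigma$ of a countable family of codings is in general far from countable (in a mixing shift a countable dense set has full closure), so ``keeping the symbolic structure of the endpoints simple'' cannot by itself deliver countability. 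Nonatomicity alone only kills countable sets; it cannot exclude an invariant measure carried by an uncountable Lebesgue-null subset of $\mathbb{S}(f)$. What actually closes the argument in the paper is a dynamical input you never use: by the Poincar\'e recurrence theorem every $f$-invariant measure is carried by recurrent points, and the orbits meeting (or asymptotically approaching) the singularity set are exactly the non-recurrent, eventually-singular ones; recurrent nonperiodic symbolic sequences correspond to points whose full orbits avoid $\mathbb{S}(f)$, whence $\nu(\mathbb{S}(f))=0$ for every nonatomic invariant measure. Your proposal replaces this invariance/recurrence argument by a cardinality argument that does not hold water.

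A secondary, fixable issue: your explicit construction is measure-inconsistent. Assigning one stack $S_{w}(n)$ to each admissible word $w$ of length $h_{n}$ with $\mathrm{width}(S_{w}(n))=\mu([w])$ gives total interval length $\sum_{w}h_{n}\,\mu([w])=h_{n}>1$, so the intervals cannot be disjoint subintervals of $[0,1]$; the correct representation (as in Shields) goes through Kakutani--Rokhlin columns, and this is also where the hypothesis $\mu([w])>0$ is genuinely used. The paper simply takes that standard representation as given and concentrates on the measure-zero statement for $\mathbb{S}(f)$, which is where your argument would need to be replaced by the recurrence argument above.
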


\begin{proof} By definition all finite symbolic words have a
representation as orbit segments of the cutting and stacking construction.
Furthermore to each nonperiodic symbol sequence corresponds a unique point
in $\left[ 0,1\right] $ whose orbit under $f$ is well defined and commutes
with the shift. By the Poincar\'{e} recurrence theorem every $f-$ invariant
measure is supported on recurrent points. Since the singular orbits,
respectively symbolic sequences, are the ones which eventually or
asymptotically fall onto the singularity set they do not intersect with the
recurrent nonperiodic symbol sequences. Therefore all invariant measures-
except the finitely supported ones- give zero measure to the singularity set.
\end{proof}

\subsection{The approximating family}

Each $f$ defined by cutting and stacking provides us with a natural
approximation family $\{f_{\mathcal{S}(n)}\}$ which we will use now to
define the approximation mappings on the rational points $D_{N}=\{\frac{Q}{N}%
:\,Q\in \{0,.....,N-1\}\}$ for the quantisation. Let the points in $%
G_{i,j}(n,N):=D_{N}\cap I_{j}^{i}(n)$ be enumerated from left to right and
let
\begin{equation*}
K(N,I_{j}^{i}(n)):=\sharp G_{i,j}(n,N)\text{ and }K(N,S_{i}(n)):=\min_{j}%
\{K(N,I_{j}^{i}(n))\}.
\end{equation*}

$K(N,S_{i}(n))$ is just the smallest number of points from the
discretisation in an interval in the stack $S_{i}(n)$. Let $\hat{G}%
_{i,j}(n,N)$ be the set of the first $K(N,S_{i}(n))$ points from $%
G_{i,j}(n,N)$ denoted by $\{x_{e}^{j,i}\}_{e=1}^{K(N,S_{i}(n))}$.

We define $f_{N,n}$ first on $\hat{D}_{N,n}=\cup _{i,j}\hat{G}_{i,j}(n,N)$
by setting 
\begin{equation}
f_{N,n}x_{e}^{j,i}=x_{e}^{j+1,i}\text{for }j<h_{i}(n):=h\left(
S_{i}(n)\right) .  \label{map up}
\end{equation}%
We call these the internal orbit segments. Clearly $\left\vert
f_{N,n}x_{e}^{j,i}-fx_{e}^{j,i}\right\vert =O\left( \frac{1}{N}\right) .$


\begin{figure}[t]
\begin{center}
\includegraphics[width=14cm]{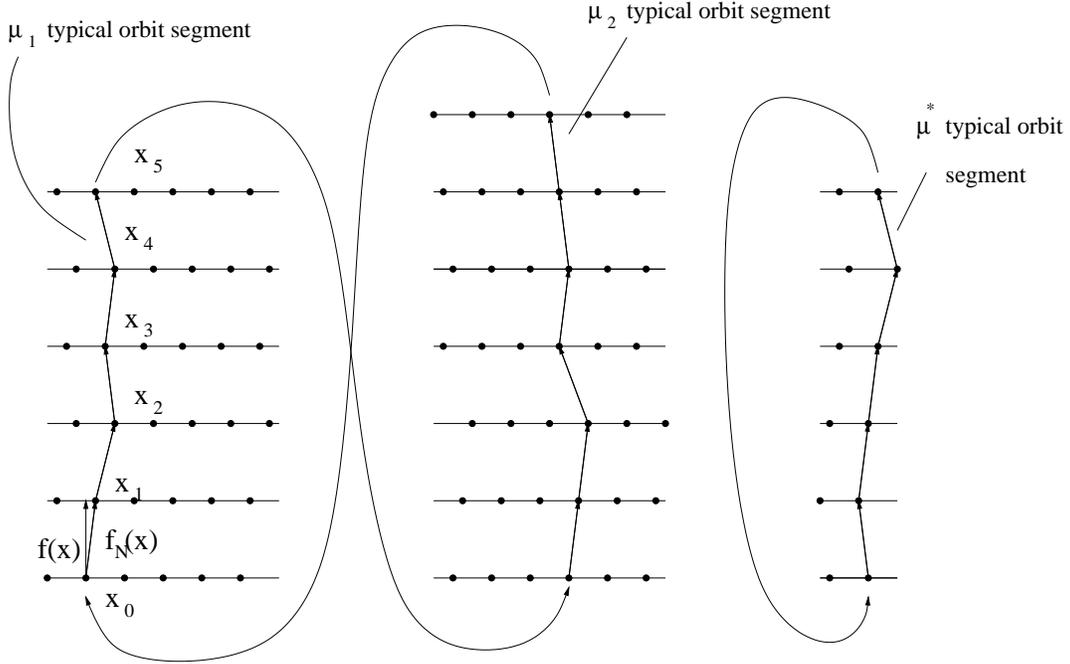}
\parbox{12cm}{\caption{\footnotesize The approximate mapping $f_{N}$ on the discrete set $D_{N}$
(denoted by the full dots), and the way the orbit segments from different
stacks are concatenated in order to produce a quantum limit of the form $%
\protect\alpha _{1}\protect\mu _{1}+\protect\alpha _{2}\protect\mu _{2}$. A
singular limit $\protect\mu ^{\ast }$ is obtained by a stack of small width
and where the orbits are concatenated from top to the bottom. }}
\end{center}
\end{figure}
We call each approximation mapping $f_{N,n}$ on $D_{N}$ whose restriction to 
$\hat{D}_{N,n}$ is given by the above construction an ergodic approximation.

Let $\check{D}_{N,n}$ be the set of points not in $\hat{D}_{N,n}$ and not in
in any of the top intervals $I_{h(S_{i})}^{i}(n).$ For $x\in \check{D}_{N,n}$
let $f_{N,n}x$ be the closest point to $fx.$ Note that $f_{N,n}$ is not
necessarily an invertible map, thus the construction implies that $%
\max\limits_{x\in \check{D}_{N,n}}|f_{N,n}x-fx|=O\left( \frac{1}{N}\right) $%
. It is clear that for fixed $n$ we have

\begin{equation}
\lim_{N\rightarrow \infty }\frac{\sharp \hat{D}_{N,n}}{\sharp D_{N}}=1.
\label{dense}
\end{equation}%
To complete the definition of $f_{N,n},$ it remains to define the mapping of
the points $\hat{G}_{i,h_{i}(n)}(n,N)$ on the tops of the stacks to points $%
\hat{G}_{i,1}(n,N)$ on the bottoms of the stacks. This will be done in a way
to produce periodic orbits which approximately mimic a given invariant
measure. Furthermore it remains to link $N$ to a given $n$ to get a good
approximation. In essence we have to require that for each fixed stage $n$
construction we have enough discretisation points in each stack. That means
that with the increase of $N$ we pass from $n$ to $n+1$ only as we pass a
critical threshold value $N_{n}.$ This can be done already without a precise
description of the gluing between top and bottom of the stacks. We need that
the approximation family $f_{N}$ is good enough to apply Theorem 1 for a
sequence $\varepsilon _{N}\rightarrow 0$ such that $\delta _{N}\left(
\varepsilon _{N}\right) \rightarrow 0$. Note that Theorem 1 does not impose
any requirements on the rate of convergence. The basic idea is to keep $N$
large enough compared with $n$ such that all intervals $I_{j}^{i}(n)$ of $%
\mathcal{S}(n)$ contain sufficiently many points from $D_{N}$. Let $%
b_{n}=\min\limits_{i}width(S_{i}(n))$. Choose a function $n(N)$ going to
infinity such that

\begin{equation}
\lim_{N\rightarrow \infty }\min_{i}\,K(N,S_{i}(n(N)))=\infty  \label{width}
\end{equation}%
which is equivalent to require $\frac{1}{b_{n}}=o(N)$. Furthermore let $%
\varepsilon _{N}=\max\limits_{i}width(S_{i}(n))$. Equation \eqref{eq:width}
implies that $\varepsilon _{N}$ tends to $0$ since $n(N)$ tends to infinity.
With this choice of $\varepsilon _{N}$ the points where $f_{N,n\left(
N\right) }$ is not yet defined do not contribute to $\delta _{N}(\varepsilon
_{N}),$ hence one obtains $\delta _{N}(\varepsilon _{N})=O(\frac{1}{N})$. An
approximation family $f_{N}:=f_{N,n(N)}$ for any function $n(N)$ satisfying
the above requirements is called proper.

We say that a measure $\mu $ appears as a quantum limit if one can find a
proper approximating family $f_{N}$ and associated quantisation $U_{N}$ such
that $\mu $ is a quantum limit of $U_{N}$. The notion of quantum limit as
well as the notion of density where introduced in Corollary 2.

\begin{thm}
\label{thm2} a) \ If $\mu $ is an absolute continuous ergodic measure for $f$%
, then $\mu $ appears as a positive $\alpha $ and $\beta -$ 
density quantum
limit. Furthermore the quantum limit has full $\alpha $ and $\beta -$
density if $\mu $ is the Lebesgue measure.

b) \ If $\mu $ is a nonatomic, singular ergodic measure for $f$ then $\mu $
appears as a quantum limit. Furthermore the quantum limit must have zero 
$\alpha $ and $\beta -$ 
density.

c) \ If $\mu _{1}$ and $\mu _{2}$ are two absolute continuous ergodic
invariant measures, then $\alpha _{1}\mu _{1}+\alpha _{2}\mu _{2}$ appears
as a positive 
$\alpha $ and $\beta -$ 
density quantum limit for any $\alpha
_{1},\alpha _{2}\in \left( 0,1\right) $ with $\alpha _{1}+\alpha _{2}=1$. 

d) \ If $\mu _{1}$ and $\mu _{2}$ are two ergodic measures at least one of
which is singular, then $\alpha _{1}\mu _{1}+\alpha _{2}\mu _{2}$ appears 
as a quantum limit for any $\alpha _{1},\alpha _{2}\in \left( 0,1\right) $
with $\alpha _{1}+\alpha _{2}=1$. Furthermore the quantum limit must have
zero 
$\alpha $ and $\beta -$ 
density.
\end{thm}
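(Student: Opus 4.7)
The plan is to apply Corollary 2, which reduces each part to constructing periodic orbits $\mathcal{O}^{(N)}$ of $f_{N}$ whose empirical measures $\delta_{\mathcal{O}^{(N)}}$ converge weakly to the target invariant measure. The cutting-and-stacking framework of Section \ref{sec:cut-stack} provides the needed flexibility: within each stack $S_{i}(n(N))$ the internal orbit segments are already determined by \eqref{map up}, but the gluings of the top points $\hat{G}_{i,h_i(n)}(n,N)$ to the bottom points $\hat{G}_{i',1}(n,N)$ are entirely at our disposal. The residual set $\check{D}_{N,n(N)}$ is a vanishing fraction of $D_N$ by \eqref{dense}, so the orbit structure on $\hat{D}_{N,n(N)}$ fully controls the quantum limit.

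For part (a), given an absolutely continuous ergodic $\mu$, I would use Birkhoff's theorem for $f$: a $\mu$-generic orbit has stack-visit frequencies converging to $\mu(S_i(n))$. I realise this visit sequence as a periodic orbit of $f_{N}$ by concatenating internal segments through the free top–bottom gluings and closing the loop. Weak convergence $\delta_{\mathcal{O}^{(N)}}\to\mu$ follows since within each stack the segments equidistribute on intervals of equal width. Because the Radon–Nikodym derivative $\ud\mu/\ud q$ is bounded on a set carrying most of $\mu$, the orbit length can be taken linear in $N$, giving positive $\alpha$- and $\beta$-density; when $\mu$ is Lebesgue, Theorem \ref{thm:qe} combined with Theorem \ref{thm:upper-bound-density} pins the density to $1$.

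For part (b), let $\mu$ be nonatomic and singular. For every $\varepsilon>0$ choose a compact $K_\varepsilon$ with $\mu(K_\varepsilon)>1-\varepsilon$ and Lebesgue measure $<\varepsilon$, using outer regularity of Lebesgue and inner regularity of $\mu$. Any orbit with $\delta_{\mathcal{O}^{(N)}}\to\mu$ must eventually place at least $1-2\varepsilon$ of its mass in $K_\varepsilon$, yet $|D_N\cap K_\varepsilon|\leq(\varepsilon+o(1))N$, which forces $|\mathcal{O}^{(N)}|/N\to 0$ and hence zero $\alpha$- and $\beta$-density. Existence of such orbits is again obtained from the concatenation scheme: Proposition \ref{prop measure} makes $\mu$ visible as prescribed cylinder frequencies at each symbolic level $n$, which I realise by choosing the gluings accordingly and letting $n(N)\to\infty$.

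Parts (c) and (d) are handled by exactly the same concatenation, now prescribing stack-visit frequencies $\alpha_1\mu_1(S_i(n))+\alpha_2\mu_2(S_i(n))$; a single periodic orbit realising these frequencies has empirical measure converging to $\alpha_1\mu_1+\alpha_2\mu_2$. In (c) both components give a positive lower bound on the achievable orbit length and the density is positive as in (a), while in (d) the singular component already forces zero density by the $K_\varepsilon$ argument of (b) applied to a set carrying most of the singular piece. The main delicate point I anticipate is the coupled choice of $n(N)$ and the orbit length: $n(N)$ must grow fast enough that $\mathcal{S}(n(N))$ resolves $\mu$ to the required precision, yet slowly enough that every stack still contains enough columns of $\hat{D}_{N,n(N)}$ to host the prescribed number of visits, so that the concatenated orbit actually closes without exhausting the available discretisation points. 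Everything else is bookkeeping.
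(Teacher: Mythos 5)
Your overall strategy coincides with the paper's: reduce everything to Corollary 2, keep the internal orbit segments \eqref{map up}, and use the freedom in the top-to-bottom gluings to assemble periodic orbits whose empirical measures approximate the target measure, with densities controlled by counting lattice points. Your zero-density argument in (b) and (d) (cover a compact $K_\varepsilon$ of large $\mu$-mass and small Lebesgue measure by a finite union of intervals and count points of $D_N$) is sound and in fact more direct than the paper's, which instead argues through the fact that the set of $\mu$-typical points has zero Lebesgue measure; your version applies to any sequence of orbits converging to $\mu$, including singular measures of full topological support, where Theorem \ref{thm:upper-bound-density} alone gives nothing.

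The genuine gap is the full-density claim for Lebesgue measure in part (a). You invoke Theorem \ref{thm:qe}, but that theorem assumes that $\mu_{\To}$, Lebesgue measure on the torus, is ergodic for the two-dimensional map $F$. Ergodicity of $f$ with respect to Lebesgue measure on $[0,1]$ does not give this: $F(p,q)=(p+2q,f(q))$ is an Anzai-type skew product, and its ergodicity requires in addition that none of the cocycles $\ue(2kq)$, $k\neq 0$, is a coboundary over $f$; this is neither assumed in Theorem \ref{thm2} nor proved anywhere, so you cannot appeal to quantum ergodicity here. The paper avoids this entirely: it shows that the $\varepsilon$-$q$-good stacks capture, as $N\to\infty$, a fraction of $D_N$ tending to $\mu_{L}\left( x:x\text{ is }\mu\text{-typical}\right)$ (equations \eqref{density} and \eqref{dense}), which equals one for Lebesgue measure, and then re-glues the segments of the good stacks into a single orbit to get $\alpha$-density one as well. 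Relatedly, your positive-density claims in (a) and (c) (``orbit length linear in $N$ because the Radon--Nikodym derivative is bounded on a set carrying most of $\mu$'') are the right idea but are asserted rather than proved; the missing quantification is exactly the paper's notion of $\varepsilon$-$q$-good stacks together with the growth condition \eqref{width}, which also removes the ``closing the orbit without exhausting discretisation points'' worry you flag: one need not follow the itinerary of a single generic point at all, since every column of a good stack already approximates $\mu$, so the available columns can be glued in an arbitrary order.
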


A transformation $f:[0,1]\rightarrow \lbrack 0,1]$ is called a finite rank
transformation if one can construct it via cutting and stacking such that
the number of stacks $\#\{S_{i}(n)\}$ in the $n$-th stack family $\mathcal{S}%
(n)$ is bounded (independent of $n$).

\begin{cor}
\label{cor:finiterank} If $f$ is a finite rank transformation, then every
ergodic non atomic invariant measure appears as the quantum limit of any
proper ergodic approximation family.
\end{cor}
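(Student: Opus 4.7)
The plan is to reduce the statement to a combinatorial question about periodic orbits of the discrete approximations $f_N$, and to apply the corollary immediately following Proposition 1 (which identifies weak limits of orbit measures as quantum limits). Concretely, given an ergodic nonatomic $f$-invariant measure $\mu$ and an arbitrary proper ergodic approximation family $\{f_N\}$, it suffices to exhibit a sequence of $f_N$-periodic orbits $\cO_N$ whose empirical measures $\delta_{\cO_N}$ converge weakly to $\mu$; by Proposition \ref{prop measure} one has $\mu(\bS(f))=0$, so $\mu$ qualifies as a quantum limit in the sense of Corollary \ref{cor:q-lim-inv}.

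First I would analyse the structure of a periodic orbit of $f_N = f_{N,n(N)}$. Because the approximation is ergodic, inside each stack $S_i(n)$ the dynamics is the deterministic upward identification, so every periodic orbit $\cO$ is a concatenation of complete passages through stacks. If $\cO$ enters stack $i$ exactly $e_i$ times, then
\begin{equation*}
\delta_{\cO} = \sum_{i=1}^{R(n)} \frac{e_i h_i(n)}{|\cO|}\,\lambda_i + o(1), \qquad |\cO| = \sum_i e_i h_i(n) + O(1),
\end{equation*}
where $\lambda_i$ is normalised Lebesgue on $S_i(n)$; the error uses $K(N,S_i(n))\to\infty$ from properness and accounts for the negligible points in $\check D_{N,n}$. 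The finite-rank hypothesis enters here for the first time: the sum has at most $R$ terms.

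Matching $\delta_{\cO}$ with $\mu$ amounts to arranging $e_i h_i(n)/|\cO|\approx q_i(n):=\mu(S_i(n))$. A convenient combinatorial object is the permutation $T_N$ on $\bigcup_i \hat G_{i,1}(n,N)$ given by $T_N(x):=f_N^{h_i(n)}(x)$ for $x$ in the bottom of $S_i(n)$; its cycles are in bijection with the $f_N$-periodic orbits, and each cycle projects to a word over the $R(n)$-letter alphabet of stack labels, with $e_i$ equal to the number of occurrences of letter $i$. The task becomes: for any top-to-bottom gluing produced by an arbitrary proper ergodic approximation, find a cycle of $T_N$ whose letter frequencies are close to the normalisation of $(q_i(n)/h_i(n))_i$.

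The main obstacle is proving this realisability statement uniformly over all proper ergodic approximations, and this is where the finite-rank hypothesis does the work. I would attack it by a pigeonhole/counting argument: the alphabet has bounded size $R$, while the total number of bottom points $\sum_i K(N,S_i(n))$ diverges; combined with the consistency of the weight vectors $q(n)$ under the refinement from level $n+1$ to $n$ and the $f$-invariance of $\mu$, this should force the set of cycle-frequency vectors of $T_N$ to become dense in the simplex of $f$-invariant weight vectors at level $n$ as $N\to\infty$. Granting this density, one selects for each $N$ a cycle best approximating $q(n(N))$; the resulting empirical measures converge weakly to $\mu$, completing the proof.
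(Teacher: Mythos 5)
Your reduction to orbit measures (via the corollary identifying weak limits of the measures $\delta_{\mathcal{O}}$ as quantum limits) is fine, but the argument collapses exactly at the step you yourself flag: the claim that for an \emph{arbitrary} proper ergodic approximation the cycle--frequency vectors of the induced return permutation $T_N$ become dense in the simplex of invariant weight vectors. No pigeonhole or counting argument can deliver this, because the top-to-bottom gluing is not at your disposal: it is part of the given approximation family and may be chosen adversarially. For example, a gluing that concatenates all internal orbit segments of all stacks into a single cycle is perfectly admissible under the definition of a proper ergodic approximation; then $T_N$ has exactly one cycle for every $N$, its frequency vector is forced to be (essentially) proportional to $\bigl(K(N,S_i(n))\bigr)_i$, and the corresponding orbit measures converge to Lebesgue measure and to nothing else. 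So the realisability statement you need, uniformly over all gluings, is not a consequence of the bounded number $R$ of stacks; the bounded alphabet is simply not where the finite-rank hypothesis can do this kind of work. To produce periodic orbits shadowing a prescribed $\mu$ one must \emph{use} the freedom in defining $f_N$ on the tops, which is precisely what the proof of Theorem \ref{thm2} does; your proposal instead tries to prove a statement that is strictly stronger than what that machinery gives, and the hard step is left as ``should force''.

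The paper's own proof is two lines and uses finite rank in a completely different place: if the number of stacks per level is bounded, the set of top/bottom boundary points is finite at each level, so $\mathbb{S}$ is countable (with only finitely many accumulation points), and therefore every nonatomic invariant measure satisfies $\mu(\mathbb{S})=0$. That is the only missing hypothesis needed to invoke Theorem \ref{thm2} (parts a and b) for \emph{every} ergodic nonatomic measure; in the general cutting-and-stacking setting this vanishing is supplied by Proposition \ref{prop measure}, which requires the symbolic full-support assumption, and the whole point of Corollary \ref{cor:finiterank} is that finite rank replaces that assumption. In your write-up you instead quote Proposition \ref{prop measure} for $\mu(\mathbb{S})=0$ without checking its hypotheses here, and you spend the finite-rank hypothesis on the (insufficient) alphabet bound. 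As it stands the proposal has a genuine gap at its central combinatorial claim, and it also misses the actual role of the finite-rank assumption.
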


\begin{proof}
For a finite rank transformation, the singularity sent $\mathbb{S}$ is a
countable set which has only a finite number of points of density. Thus any
non atomic invariant measure can not be support on $\mathbb{S}$.
\end{proof}

A point $x$ is called $\mu $-typical if 
${\lim_{m,u\rightarrow \infty }}\frac{1}{m+u+1}\sum\limits_{i=-u}^{m}\delta
\left( f^{i}x\right) =\mu $. For the proof of the theorem we need the
following simple fact whose proof is omitted since it is immediate from the
definition of weak convergence of measures.

\begin{prop}
\label{prop typical} Fix $x$ be $\mu $-typical. Let $j(m,u)$ and $%
\varepsilon (j)$ be functions such that $j(m,u)\rightarrow \infty $ for $%
m,u\rightarrow \infty $ and $\varepsilon (j)\rightarrow 0$ as $j\rightarrow
\infty $. Let $\{y_{k}^{(j)}\}_{k\in \mathbb{Z}}$ be a family of sequences
with the property that $\left\vert y_{k}^{(j)}-f^{k}x\right\vert \leq
\varepsilon (j)$ for $-u\leq k\leq m$. Then ${\lim_{m,u\rightarrow \infty }}%
\frac{1}{m+u+1}\sum\limits_{k=-u}^{m}\delta \left( y_{k}^{(j(m,u))}\right)
=\mu $.
\end{prop}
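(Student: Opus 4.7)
The plan is to test the empirical measures against an arbitrary continuous function and reduce to the $\mu$-typicality hypothesis by exploiting uniform continuity of the test function. Setting $\nu_{m,u} := \frac{1}{m+u+1}\sum_{k=-u}^{m}\delta(y_k^{(j(m,u))})$, the conclusion $\nu_{m,u} \to \mu$ in the weak-$*$ sense is equivalent to $\int g\, d\nu_{m,u} \to \int g\, d\mu$ for every $g \in C([0,1])$, so it suffices to verify this convergence for a fixed but arbitrary continuous $g$.

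Fix such a $g$ and some $\eta > 0$. Since $[0,1]$ is compact, $g$ is uniformly continuous, so there is a $\delta > 0$ such that $|p-q| < \delta$ implies $|g(p)-g(q)| < \eta$. Now I would use the two hypotheses $\varepsilon(j)\to 0$ and $j(m,u)\to\infty$ in tandem: choose $M,U$ so that $\varepsilon(j(m,u)) < \delta$ whenever $m \geq M$ and $u \geq U$. For such $m,u$ and every $-u \leq k \leq m$ one then has $|y_k^{(j(m,u))} - f^k x| \leq \varepsilon(j(m,u)) < \delta$, hence $|g(y_k^{(j(m,u))}) - g(f^k x)| < \eta$ pointwise in $k$. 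Averaging gives
\begin{equation*}
\Bigl| \int g\, d\nu_{m,u} - \frac{1}{m+u+1} \sum_{k=-u}^{m} g(f^k x) \Bigr| < \eta .
\end{equation*}

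Letting $m,u \to \infty$ and invoking the $\mu$-typicality of $x$ applied to $g$, which makes the second average converge to $\int g\, d\mu$, I would obtain
\begin{equation*}
\limsup_{m,u \to \infty} \Bigl| \int g\, d\nu_{m,u} - \int g\, d\mu \Bigr| \leq \eta ,
\end{equation*}
and since $\eta > 0$ is arbitrary the $\limsup$ is zero. As $g \in C([0,1])$ was arbitrary, the claim follows.

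There is really no serious obstacle, which is why the authors relegate the proof to a one-line remark: once one tests against continuous functions, uniform continuity absorbs the perturbation $y_k^{(j(m,u))} - f^k x$ uniformly in $k$. The only bookkeeping point is that the threshold $(M,U)$ depends on $g$ and $\eta$ but not on any further data, so the two limits $m,u \to \infty$ may indeed be taken jointly as stated, and the splitting between ``approximation error'' and ``ergodic average'' is legitimate.
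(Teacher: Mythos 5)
Your proof is correct and is exactly the argument the paper has in mind: the paper omits the proof as ``immediate from the definition of weak convergence of measures,'' and your write-up simply spells out that standard argument (test against a continuous $g$, use uniform continuity to absorb the $\varepsilon(j(m,u))$-perturbation uniformly in $k$, then invoke $\mu$-typicality of $x$).
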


\begin{proofof}{Theorem 3}
For the proof of part b) and part a) for $\alpha -$ density we complete the
definition of $f_{N}$ as follows. Let us complete each internal orbit
segment into a periodic orbit by setting $%
f_{N}x_{e}^{h_{i}(n),i}=x_{e}^{1,i} $ (compare (\ref{map up})) . For points $%
x\in D_{N}\setminus \mathbb{S}$ where the map is not yet defined we have the
freedom to map $x$ anywhere, for preciseness define $f_{N}x$ to be the
closest point to $fx.$ By Corollary 2 it is enough to show that there is a
sequence of periodic orbits on $D_{N}$ whose point-mass average converge for 
$N\rightarrow \infty $ to the considered measure $\mu $.

Fix an arbitrary enumeration $\left\{ \mathcal{O}_{j}^{\left( N\right)
}\right\} $ of the periodic orbits on $D_{N}$. For $\mu $ absolute
continuous the set of points $x\in \left[ 0,1\right] $ with ${%
\lim_{m,u\rightarrow \infty }}\frac{1}{m+u+1}\sum\limits_{i=-u}^{m}\delta
\left( f^{i}x\right) =\mu $ has positive Lebesgue measure and in the case $%
\mu $ is the Lebesgue measure it has full measure. Let $x$ be $\mu $-typical
and consider for each $N$ the stack $S_{i}(n(N))$ in which $x$ is placed,
where $n(N)$ is a function satisfying the requirements of Equation %
\eqref{width}. Let $\mathcal{O}_{j_{\mathit{l}}\text{ }}^{\left( N\right) }$
be the set of periodic orbits in $\cup {j}\hat{G}_{i,j}(n(N),N).$ They stay 
$width(S_{i}(n(N)))$-close to the orbit segment $%
\{f^{-m_{0}}x,.....,f^{n_{0}}x\}$ where $m_{0}$ and $n_{0}$ are the smallest
and largest iterates such that $f_{S_{i}(n(N))}^{k}x$ is still defined. Note
that $m_{0}+n_{0}+1=h\left( S_{i}(n(N)\right) )$. Since 
\begin{equation*}
width(S_{i}(n(N)))\leq \varepsilon
_{N}:=\max\limits_{i}width(S_{i}(n(N)))\rightarrow 0\ \ \text{for }%
N\rightarrow \infty
\end{equation*}%
we can apply the above proposition to the family $\mathcal{O}_{j_{\mathit{l}}%
\text{ }}^{\left( N\right) }$ (with fixed $\mathit{l}$)\textit{\ }to
conclude that $\mu $ is a quantum limit, however this construction has not
yet proved the positive density.

To prove the positive density we need a quantified version of the above. Let 
$\mathcal{J}_{q}$ be the set of subintervals of $[0,1]$ with boundary points
of the form $\frac{p}{q}$. For the convergence of a sequence to a measure it
is clearly enough to check the characteristic function averages with respect
to the elements of $\cup _{q}\mathcal{J}_{q}$. A stack $S_{i}(n(N))$ is
called $\varepsilon -q-$good with respect to $\mu $ if for all $x\in
I_{1}^{i}(n(N))$ (i.e.~$x$ in the base of the stack $S_{i}(n(N))$) 
\begin{equation*}
\mu (J)-\varepsilon \leq \frac{1}{h_{i}(n(N))}\sum\limits_{i=0}^{h_{i}(n)}%
\mathbf{1}_{J}\left( f|_{S_{i}(n(N))}^{i}(x)\right) \leq \mu (J)+\varepsilon
\ for\ \forall J\in \mathcal{J}_{q}
\end{equation*}%
where $h_{i}(n(N))$ denotes the height of the stack $S_{i}(n(N))$. Denote
the family of such stacks by $\mathcal{G}(n,q,\varepsilon ,\mu )$ and by $%
\mathbb{G}(n,q,\varepsilon ,\mu )$ the set of points contained in $\mathcal{G%
}(n,q,\varepsilon ,\mu )$. Clearly one has $\forall q,\varepsilon >0$ 
\begin{eqnarray*}
\ \ \lim\limits_{n\rightarrow \infty }\ \mu _{L}(\mathbb{G}(n,q,\varepsilon
,\mu )) &=&\text{ }\mu _{L}\left( x:x\text{ is }\mu -\text{ typical}\right) 
\text{and} \\
\text{ }\lim\limits_{n\rightarrow \infty }\mu (\mathbb{G}(n,q,\varepsilon
,\mu )) &=&1.
\end{eqnarray*}%
Let $\varepsilon (n)$ be a sufficiently slowly decreasing function and $%
q(\varepsilon (n))$ be a sufficiently slowly increasing function that 
\begin{equation}
\lim\limits_{n\rightarrow \infty }\mu _{L}(\mathbb{G}(n,q(n),\varepsilon
(n),\mu ))=\mu _{L}\left( x:x\text{ is }\mu -\text{ typical}\right) .
\label{density}
\end{equation}

With this new notation we are ready to prove that $\mu $ is a quantum limit
of positive density. We define the sequence $\mathcal{O}_{j_{\mathit{l}}%
\text{ }}^{\left( N\right) }$of periodic orbits which give rise to the
desired quantum limit as follows. For fixed $N$ the set $\mathbb{G}%
(n(N),q(n(N)),\varepsilon (n(N)),\mu )\cap \hat{D}_{N,n(N)}$ consists of a
collection of points of periodic orbits. The sequence $\mathcal{O}_{j_{%
\mathit{l}}\text{ }}^{\left( N\right) }$consists of the set of these orbits.
The positive $\beta -$ density and full density in the case of Lebesgue
measure then follows from Equation \eqref{density} and \ref{dense}. To prove
part a) for the $\alpha -$ density we only need to modify the map $f_{N}$ on
top of the stacks. This will be done such that the collection of periodic
orbits $\mathcal{O}_{j_{\mathit{l}}\text{ }}^{\left( N\right) }$ becomes
just one periodic orbit for each $N.$ This completes the proof of part a).

To prove part b) observe that due to Proposition \ref{prop measure} we have $%
\mu \left( \mathbb{S}\right) =0$ and hence can apply Theorem 1 to get an
invariant measure out of the quantum-limit. One considers the set of $\mu $%
-typical points. From the proof of Proposition \ref{prop measure} follows
that the orbit of every $\mu $-typical point does not intersect nor converge
to the singularity set $\mathbb{S}$. Since the set of $\mu -$typical points
has zero Lebesgue measure we can obtain only a zero density quantum limit
just as in the proof of part a).

To prove part c) one has to modify the construction of the approximating
mapping $f_{N}$ in the following way. Instead of making $f_{N}$ periodic
within each stack $S_{i}(n(N))$ we want to connect two stacks say $%
S_{i}(n(N))$ and $S_{j}(n(N))$ where the orbit segments in the $i-$ th stack
respectively $j$-th stack are approximately typical for $\mu _{1}$
respectively $\mu _{2}$ to get an average of $\mu _{1}$ and $\mu _{2}$.

For $l=1,2$ let $A_{l}(N):=\{i:S_{i}(n(N))\in \mathcal{G}(n\left( N\right)
,q(n),\varepsilon (n),\mu _{l})$. On $\ \hat{D}_{N,n(N)}$ define $f_{N}$ as
before by $f_{N}x_{e}^{j,i}=x_{e}^{j+1,i}$ for $j<h_{i}(n)$. For $%
x_{e}^{j,i}\in \mathcal{G}(n\left( N\right) ,q(n),\varepsilon (n),\mu
_{l})\cap \hat{D}_{N,n(N)}$ one has for $\forall J\in \mathcal{J}_{q(n)}$ 
\begin{equation*}
\mu _{l}(J)-\varepsilon (n)+O(\frac{1}{N})\leq \frac{1}{h_{i}(n)}%
\sum\limits_{0\leq k\leq h_{i}(n)-1}\mathbf{1}_{J}\left(
f_{N}^{k}x_{e}^{1,i}\right) \leq \mu _{l}(J)+\varepsilon (n)+O(\frac{1}{N}).
\end{equation*}%
Let $\theta _{l}:=\mu _{L}\left( x:x\text{ is }\mu _{l}-\text{ typical}%
\right) $ and note that 
\begin{equation*}
\frac{\#A_{1}\left( N\right) \cap \hat{D}_{N,n(N)}}{\#A_{2}\left( N\right)
\cap \hat{D}_{N,n(N)}}\rightarrow \frac{\theta _{1}}{\theta _{2}}\text{ for }%
N\rightarrow \infty .
\end{equation*}%
Thus by gluing all the orbit segments of $f_{N}$ in the sets $A_{1}\left(
N\right) \cap \hat{D}_{N,n(N)}$ and $A_{2}\left( N\right) \cap \hat{D}%
_{N,n(N)}$ in such a way that they form one periodic orbit we obtain a
family of periodic orbits with quantum limit $\alpha _{1}\mu _{1}+\alpha
_{2}\mu _{2}$ where $\alpha _{l}=\frac{\theta _{l}}{\theta _{1}+\theta _{2}}%
. $ The $\alpha $ and $\beta -$ densities are just $\theta _{1}+\theta _{2}.$

It is easy to construct in the same spirit approximation families $f_{N}$
for any values $\alpha _{1}$ and $\alpha _{2}=1-$ $\alpha _{1}.$ Suppose
first that $\alpha _{1}<\frac{\theta _{1}}{\theta _{1}+\theta _{2}}$ and
hence $\alpha _{2}>\frac{\theta _{2}}{\theta _{1}+\theta _{2}}.$ Take in
each stack $S_{i}(n(N))$ with $i\in A_{1}\left( N\right) $ approximately $%
\frac{\alpha _{1}}{\alpha _{2}}$ of the internal orbit segments. The
function $n\left( N\right) $ is sufficiently slowly growing (\ref{width}) to
ensure that there are enough points in the discretisation set $\hat{D}%
_{N,n(N)}$ we can guarantee the convergence to $\frac{\alpha _{1}}{\alpha
_{2}}.$ Gluing these segments together with all the internal orbit segments
of $A_{2}\left( N\right) $ yields a single periodic orbit $\mathcal{O}%
^{\left( N\right) }$. The family of periodic orbits $\left\{ \mathcal{O}%
^{\left( N\right) }\right\} _{N}$ defines a quantum limit for the measure $%
\alpha _{1}\mu _{1}+\alpha _{2}\mu _{2}$ with $\alpha $ and $\beta -$
density $\frac{\alpha _{1}}{\alpha _{2}}\theta _{1}+\theta _{2}>0.$ The case 
$\alpha _{1}>\frac{\theta _{1}}{\theta _{1}+\theta _{2}}$ is analogous.

The proof of d) follows immediately by combining the arguments from parts b)
and c).
\end{proofof} 

\section{Examples}

\subsection{Interval Exchange Maps}

Consider a permutation $\pi $ of $\{1,2,\ldots ,n\}$ and a vector $\vec{v}%
=(v_{1},\ldots ,v_{n})$ such that $v_{i}>0$ for all $i$ and $%
\sum_{i=1}^{n}v_{i}=1$. Let $u_{0}=0$, $u_{i}=v_{1}+\dots +v_{i}$ and $%
\Delta _{i}=(u_{i-1},u_{i})$. The interval exchange transformation $T=T_{\pi
,\vec{v}}$, $T:[0,1]\rightarrow \lbrack 0,1]$ is the map that is an isometry
of each interval $\Delta _{i}$ which rearranges these intervals according to
the permutation $\pi $.

The Lebesgue measure is always an invariant measure for an IET. A typical
IET is uniquely ergodic, however there exist minimal, non uniquely ergodic
IETs. The first example of a minimal, non uniquely ergodic IET was given by
Keynes and Newton \cite{Keynes} and Keane \cite{keane}. The number of
ergodic invariant measures for a minimal IET on $m$ intervals is bounded by
the $\lfloor m/2 \rfloor$. \cite{katok1,veech}. The set of invariant measures always includes
absolutely continuous measures but can also include singular measures. It is
known that an interval exchange transformation on $m$ intervals is at most
of rank $m$, in particular it is a finite rank transformation (see for
example \cite{ferenczi}). In fact the typical IET is of rank 1 \cite{veech2}%
, although we will not use this fact. Thus we can apply Corollary \ref%
{cor:finiterank} to conclude:

\begin{enumerate}
\item {} any uniquely ergodic IET is quantum uniquely ergodic,

\item {} any minimal, non uniquely ergodic IET is not quantum uniquely
ergodic,

\item {} any absolutely continuous invariant measures appear as a positive
density quantum limit,

\item {} any singular ergodic invariant measure appears as a zero density
quantum limit.
\end{enumerate}

\subsection{The full shift}

Another example of a cutting and stacking transformation $f_{\mathcal{B}}$
that has $\mu _{L}$ as an ergodic invariant measure and admits further
singular measures $\mu $ such that $\mu (\mathbb{S})=0$ is given by the full
shift. Take any cutting and stacking model of the full two-sided shift on
two symbols with Bernoulli-measure $p_{0}=p_{1}=\frac{1}{2}$ (for details of
such models we refer to the book \cite{shields}). Note that although the
full shift has many periodic orbits the cutting and stacking model has none.
We remark that one could introduce some periodic orbits at the boundaries of
the subintervals but they would all sit or fall at singularity points and
hence do not appear as quantum limits, in other words there are no scars in
quantised cutting and stacking skew product mappings. By Proposition 3 all
other invariant measures of the full shift have no support on the
singularity set. Hence we can apply Theorem 3. It is interesting to note,
that the fractal-dimensions (box or Hausdorff dimension) of the singularity
set are rather large and that the upper and lower dimensions do not
coincide. A straightforward counting argument shows for instance that the
upper and lower box dimensions are in the open interval $\left( \frac{1}{2}%
,1\right) .$

\section{Comments and Conclusions}

We have shown in this paper that for a rather general class of dynamical
systems on the torus the variety of different invariant measures can be
recovered as quantum limits of the corresponding proper families of
quantised maps. The quantisation scheme here used is based on the one
introduced by \cite{mr}. For a discussion of alternative quantisation
procedures and a critical comparison we refer to the recent work \cite{Zel05}.

One of the main features in our systems is the presence of singularities. In
the quantisation procedure this provides enough freedom to obtain
eigenfunctions reflecting the typical orbit structure with respect to any non
atomic ergodic measure. It is an interesting
question whether our results are still valid in case the classical dynamical
system has no singularities. We conjecture that similar statements can be
obtained. For this it seems natural to replace the top-bottom gluing scheme
in the interval exchange approximating family by cutting and
``crossover-concatenation'' of touching
period orbits.

Concerning the quantisation of flows one might hope that a good
understanding of the associated quantised Poincare maps can guide one to a
deeper understanding of concrete features of eigenfunctions and spectrum. An
natural class of examples to study this questions are polygonal billiards.
In the case of rational polygons the associated Poincare maps for the
directional flow are interval exchange transformations which can be
quantised similar to the quantisation used in this paper. It would be
interesting to compare the results obtained that way with the semiclassical
properties of the direct flow quantisation via the billiard Hamiltonian.


\def\cprime{$'$}
\providecommand{\bysame}{\leavevmode\hbox to3em{\hrulefill}\thinspace}
\providecommand{\MR}{\relax\ifhmode\unskip\space\fi MR }
\providecommand{\MRhref}[2]{%
  \href{http://www.ams.org/mathscinet-getitem?mr=#1}{#2}
}
\providecommand{\href}[2]{#2}

\end{document}